\newcommand{\ds}{\displaystyle}
\newcommand{\re}{\mathbb{R}}
\newtheorem{thm}{Theorem}[section]
\newtheorem*{thm*}{Theorem}
\newtheorem{cor}[thm]{Corollary}
\newtheorem{lem}[thm]{Lemma}
\theoremstyle{definition}
\theoremstyle{remark}
\newtheorem{rem}[thm]{Remark}
\numberwithin{equation}{section}
\newcommand{\p}{\mathcal P}
\newcommand{\h}{\mathcal H}
\newcommand{\la}{\lambda}
\title[A generalization of Marstrand's theorem and some geometric applications]{\bf{A generalization of Marstrand's theorem and some geometric applications}}%
\author{Carlos Gustavo Moreira,\\ Sergio Roma\~na, Waliston Luiz Silva}
\address{Carlos Gustavo~Moreira, Department of Mathematics \& SUSTech International Center for Mathematics, Southern University of Science and Technology, Shenzhen, Guangdong 518055, China and Instituto de Matem\'atica Pura e Aplicada (IMPA), Estrada Dona Castorina, 110, 22460-320 - Rio de Janeiro, RJ - Brazil}
\email{gugu@impa.br}
\address{Sergio Roma\~na, Instituto de Matem\'atica - Universidade Federal do Rio de Janeiro, Av. Pedro Calmon, 550 - Cidade Universit\'aria, 21941-901 - Rio de Janeiro, RJ, Brazil}
\email{sergiori@im.ufrj.br}
\address{Waliston Luiz Silva, Departamento de Matem\'atica e Estat\'istica, Universidade Federal de S\~ao Jo\~ao del Rei - Pra\c ca Frei Orlando 170, 36307-352 - S\~ao Jo\~ao del-Rei, MG, Brazil}
\email{waliston@yahoo.com}
\date{}
\begin{document}
\maketitle

\begin{abstract}
In this paper we prove using quite elementary methods, with a combinatorial nature, two general results related to Marstrand's projection theorem in a quite general formulation over metric spaces under a suitable transversality condition (the ``projections" are in principle only continuous, and the transversality condition gives flexibility in exponents) - the result is flexible enough to, in particular, recover most of the classical Marstrand-like theorems. We also give some new geometrical applications of our results - one of them is a new result related to Falconer's distance conjecture.
\end{abstract}

\maketitle
\section{Introduction}
Let $(Y,d)$ be a metric space. Given $\delta>0$ and $y\in Y$ we denote by $B(y,\delta)=\{z:d(z,y)<\delta\}$ the ball of center $y$ and radius $\delta$.\\
\indent If $U$ is a subset of $Y$, the diameter of $U$ is $|U|=\text{sup}\{d(x,y): x, y \in U\}$ and, if $\mathcal{U}$ is a family of subsets of $X$, the norm of $\mathcal{U}$ is defined as
$$\Vert\mathcal{U}\Vert=\ds \text{sup}_{U\in\mathcal{U}}|U|.$$
Given $\epsilon>0$ and $s>0$ denote $\ds\h^{s}_{\epsilon}(X)=\inf_{\substack{\mathcal{U} \ \text{covers} \ X \\ \Vert\mathcal{U}\Vert<\epsilon}}\,\sum_{U\in \mathcal{U}}|U|^s $. The Hausdorff $s$-measure is defined as 
$$\h^{s}(X)=\lim_{\epsilon\to 0}\h^{s}_{\epsilon}(X).$$
It is not difficult to show that there exists a unique $d\geq0$ for which $\h^{s}(X) =+\infty$  if $s < d$ and $\h^{s}(X) = 0$ if $s>d$. The Hausdorff dimension of $X$ is defined as $HD(X) = d$.
\ \\

Let $X$ be a subset of a complete metric space, $(\Lambda, \p)$ a probability space and $\pi:\Lambda\times X\to \re^k$ be a measurable map such that, for every $\lambda\in\Lambda$, the map $\pi_{\lambda}:X\to \re^k$ given by $\pi_{\lambda}(x):=\pi(\lambda,x)$ is continuous. Informally, one can think of $\pi_\la(\cdot)$ as a family of projections parameterized by $\la$. We assume that for some positives real numbers $\alpha, C$ the following transversality condition is satisfied:
\begin{equation}\label{transversality}
\p[\la\in \Lambda: d(\pi_{\la}(x_1),\pi_{\la}(x_2))\leq\delta d(x_1,x_2)^{\alpha}]\leq C\delta^{k}
\end{equation}
for all $\delta\in (0,1)$ and all $x_1, x_2 \in X$. 

Notice that, by replacing $C$ with max$\{1,C\}$, the above inequality, and also the inequality 
\begin{equation}\label{transversality2}
\p[\la\in \Lambda: d(\pi_{\la}(x_1),\pi_{\la}(x_2))\leq\delta d(x_1,x_2)^{\alpha}]\leq C\cdot\text{min}\{1,\delta\}^{k}
\end{equation} 
hold for every $\delta>0$.

\noindent We prove general versions of classical Marstrand's theorems stated originally for linear projections of plane sets on the real line (\cite{Marst}) assuming that $X$ is an analytic subset of a complete metric space (so it holds in particular if $X$ is a Borel set):

\begin{thm}\label{thm_Marstrand2} Let $X$ be an analytic subset of a complete metric space. 
\begin{enumerate}
\item[\emph{1.}] If $HD(X)>\alpha k$, then $Leb(\pi_{\lambda}(X))>0$ for almost every $\lambda\in \Lambda$. 
\item[\emph{2.}] If $HD(X)\leq \alpha k$, then $HD(\pi_{\lambda}(X))\geq \frac{HD(X)}{\alpha}$ for almost every $\lambda\in \Lambda$.
\end{enumerate}
\end{thm}

The following is an important result by Howroyd(see \cite{Howroyd_1995}):

\begin{lem}[Howroyd]\label{Howroyd}
Every analytic subset of a complete metric space with positive (or infinite) Hausdorff $s$-measure  contains a compact set which has finite and positive Hausdorff $s$-measure. 
\end{lem}

As we will see, it follows from Howroyd's result and from Lemma \ref{Frostman} that any set $X$ as in the previous statement satisfying $HD(X)>\alpha k$ contains a compact subset $X'$ such that $HD(X')=d>\alpha k$, $\h^d(X')>0$ and, for some constant $C>0$, $\h^d(X'\cap B(x, \delta))\leq C\delta^{d}$, for all $x\in X$ and all $\delta>0$. In the previous setting, for such a set $X'\subset X$, we have the following 

\begin{thm}\label{thm_L2} In the above conditions,
\begin{enumerate}
\item[\emph{1.}] For almost every $\lambda\in\Lambda$, the measure $\mu_{\lambda}:=(\pi_{\lambda})_*(\h^d|_{X'})$ is absolutely continuous with respect to the Lebesgue measure of ${\mathbb R}^k$.
\item[\emph{2.}] For almost every $\lambda\in\Lambda$, the Radon-Nykodim derivative $h_{\lambda}$ of $\mu_{\lambda}$ is a $L^2$ function, and $\int_{\Lambda}||h_{\lambda}||_{L^2}^2d\lambda<\infty$. 
\end{enumerate}
\end{thm}

The proofs of these results use quite elementary methods, with a combinatorial nature, and generalize the proof by the first named author and Y. Lima of the classical Marstrand's theorem for linear projections of plane sets on the real line (see \cite{YG}). 

In sections 3 and 4 we use these theorems to give a quick proof of a theorem by Solomyak (\cite{Solomyak}) on arithmetic sums of homogeneous Cantor sets, and we prove two new results: the first one is related to a famous conjecture by Falconer (see \cite{Fal_conj}), according to which, for every subset $E\subset \mathbb{R}^d$, $d\geq 2$, such that $HD(E)>d/2$, then the distance set
\[
\Delta(E) = \{|x-y|;\ x\in E, y\in E\},
\]
has positive Lebesgue measure, where $|\cdot|$ represents the Euclidean norm in $\mathbb{R}^d$. This conjecture is still open, and motivated several recent works, as \cite{Guth_et_al} and \cite{Du_et_al}.\\
Given any point $\lambda$ of $\mathbb{R}^d$ we define the set 

$$\Delta_{\lambda}(E):=\{|\lambda-y|: y\in E\},$$ 
 of distances from the point $\lambda$ to all points in the set $E$.
We prove that, if $HD(E)>1$ then, for almost all $\lambda\in \mathbb{R}^d$, the set $\Delta_{\lambda}(E)$ has positive Lebesgue measure.

The second one, proved in section 4, is a version of Marstrand's theorem in non-positive curvature.

We believe that our results will be also very useful in Dynamical Systems, for the study of the geometry of hyperbolic sets in arbitrary dimensions. In this context, conjugation maps and holonomies are often H\"older continuous, but not Lipschitz, and the flexibility given by the exponent $\alpha$ in theorems \ref{thm_Marstrand2} and \ref{thm_L2} will be an essential tool for new results related to the existence of blenders and to typical continuity of fractal dimensions of those sets.

{\bf Acknowledgement:} We would like to thank Jorge Erick L\'opez Vel\'azquez for helpful conversations on the subject of this paper.



\section{Proof of the Theorems}\label{section_results}
When $X$ is an analytic subset of a complete metric space and  $HD(X)>\alpha k$, by lemma \ref{Howroyd}, there is a compact subset of $\tilde{X}$ and $d>\alpha k$ such that $0<\h^d(\tilde{X})<+\infty$.\\
Thus, from now on,  we can assume $X$ is compact and satisfies $0<\h^d(X)<+\infty$ for some $d>\alpha k$. 
The following result is analogous to Frostman's Lemma (see for instance Theorem 8.8 of \cite{Mattila_1995}).

\begin{lem}\label{Frostman}
Given $\eta\in (0,1)$, there exists $X'\subset X$ compact such that $\h^d(X')>(1-\eta)\h^d(X)$ 
 and there is a constant $c$ such that 
\begin{equation}\label{eq1-Frostman}
\h^d(X'\cap B(x, \delta))\leq c\delta^{d},
\end{equation}
for all $x\in X$ and all $\delta>0$.
\end{lem}


\begin{proof}
By definition of Hausdorff dimension, there is $\epsilon>0$ such that $\h_{\epsilon}^{d}(X)>(1-\frac{\eta}2)\h^{d}(X)$.
Let $C_1:=2\cdot 5^d/\eta$ and consider the collection $\mathcal{B}$ of balls $B(x,\delta)$ such that $\h^d(X\cap B(x,\delta))>C_1\delta^{d}$ for some $x\in X$ and some positive $\delta\leq \frac{\epsilon}{5}$. \\
\textit{Claim:} The set $\ds \bigcup_{B\in \mathcal{B}}B$ satisfies 
\begin{equation}\label{eq2-Frostman}
\ds \h_{\epsilon}^{d}( \ds\bigcup_{B\in \mathcal{B}}B\cap X) <\frac{\eta}{2}\h^{d}(X).
\end{equation}
\textit{Proof of Claim:} Take a disjoint collection $B(x_1, \tilde{\delta}_1), B(x_2, \tilde{\delta}_2), \dots, B(x_n, \tilde{\delta}_n),\dots$ of elements of $\mathcal{B}$ such that  $\tilde{\delta}_{k}>\frac{1}{2}\sup \{\delta>0; \ \text{there is} \ B(x,\delta)\in\mathcal{B} \ \text{with} \ B(x,\delta)\cap \ds\bigcup_{j<k}B(x_j,\tilde{\delta}_{j}) =\emptyset \}$. Assume, without loss of generality, that the sequence $\{\tilde{\delta}_{i}\}_{i}$ is decreasing.\\
We have $\ds\bigcup_{i=1}^{\infty}B(x_i,5\tilde{\delta}_{i})\supset \ds\bigcup_{B\in \mathcal{B}}B$. Indeed, let $B(x,\delta)\in \mathcal{B}$ and consider $k_0=\max\{k:\tilde{\delta}_{k}\geq \frac{\delta}{2}\}$. If $B(x,\delta)=B(x_j,\tilde{\delta}_j)$ for some $j\le k_0+1$ we have nothing to do. If for all $j\leq k_0$, $B(x,\delta)\cap B(x_{j},\tilde{\delta}_{j})=\emptyset$, then by definition of $\tilde{\delta}_{k_{0}+1}$ we should have $\tilde{\delta}_{k_{0}+1}\geq \frac{\delta}{2}$ and this is a contradiction with the definition of $k_0$. Thus, there is $j\leq k_0$ such that $B(x,\delta)\cap B(x_{j},\tilde{\delta}_{j})\neq\emptyset$ and $\tilde{\delta}_{j}\geq \frac{\delta}{2}$. This last condition implies $B(x,\delta)\subset B(x_j, 5\tilde{\delta}_{j})$, and we are done.\\
However, by definition of $\mathcal{B}$, $\delta^d<\frac{\eta}{2\cdot 5^d}\h^{d}(X\cap B(x,\delta))$ for all $B(x,\delta)\in \mathcal{B}$. Therefore, 
\begin{equation}\label{eq3-Frostman}
\sum_{i=1}^{\infty}(5\tilde{\delta}_{i})^{d}=5^d\sum_{i=1}^{\infty}\tilde{\delta}_{i}^{d}<\frac{5^d\eta}{2\cdot 5^d}\sum_{i=1}^{\infty}\h^{d}(X\cap B(x_i,\tilde{\delta}_{i}))\leq \frac{\eta}{2}\h^{d}(X).
\end{equation}
Since $5\tilde{\delta}_{i}\leq \epsilon$, we concluded the proof of claim.\\
Now consider the set $\ds X'=X\setminus\bigcup_{B\in \mathcal{B}}B$ that satisfies 
\begin{eqnarray*}
\h^{d}(X')&\geq & \h^{d}_{\epsilon}(X') \\ 
&\ge&\h^{d}_{\epsilon}(X)-\h^{d}_{\epsilon}\left( X\cap\bigcup_{B\in \mathcal{B}}B\right)\\
&>& (1-\frac{\eta}2)\h^{d}(X)-\frac{\eta}{2}\h^{d}(X)=(1-\eta)\h^{d}(X).
\end{eqnarray*}
Moreover, by definition of $X'$ we have that $\h^{d}(X'\cap B(x,\delta))\leq C_1\delta^{d}$ for $\delta\leq \frac{\epsilon}{5}$.
Also, for $\delta>\frac{\epsilon}{5}$ we have $\h^{d}(X'\cap B(x,\delta))\le \h^{d}(X')\le \h^{d}(X)\le C_2\delta^{d}$, for $C_2:=(\frac{5}{\epsilon})^d\h^{d}(X)$.

Therefore, taking $c=\max\{\frac{2\cdot 5^d}{\eta} \, ,\left( \frac{5}{\epsilon}\right)^{d}\h^{d}(X) \}$ we conclude the proof.
\end{proof}
\ \\

\noindent Assume from now on that $X$ is compact and satisfies
$\h^d(X\cap B(x, \delta))\leq c\delta^{d}$,
for all $x\in X$ and all $\delta>0$.

\begin{lem}\label{Lemma2}
There is $\hat{C}>0$ such that for all $\eta>0$ there is a covering of $X$ by balls $B(x_i,\delta_i)$, $1\le i\le N$ with $\delta_i<\eta$ for all $i\in \{1,\dots, N\}$ with the following properties 
\begin{enumerate}
\item[\emph{1.}] $\ds\frac{3}{4}\h^{d}(X)<\sum_{i=1}^{N}\delta_i^{d}<\frac{5}{4}\h^{d}(X)$.
\item[\emph{2.}] For all $x\in X$ and $r>0$, we have $\ds \sum_{B(x_i,\delta_i)\subset B(x,r)}\delta^{d}_{i}<\hat{C}r^d$.
\end{enumerate}
\end{lem}
\begin{proof}
There is $\epsilon> 0$ such that $\h^d_{\epsilon}(X)>\frac{3}{4}\h^d(X)$. Let $\tilde\eta:=\min\{\epsilon,\eta\}$.

As $X$ is compact, then we start with a finite covering $\{B(z_i, \gamma_i), 1\le i\le m\}$ of $X$ with $\gamma_i<\tilde\eta$, $\forall i\le m$.

For $1\le j\le m$, let $A_j:=(X\cap B(z_j, \gamma_j))\setminus \bigcup_{i<j}B(z_i, \gamma_i)$. The sets $A_j$ are disjoint and cover $X$. We take, for each $j\le m$, a covering of $A_j$ by balls $B(y_{js},\epsilon_{js}), s\in {\mathbb N}$ (which effectively intersect $A_j$) with $\epsilon_{js}<\tilde\eta, \forall j, s$ and $\sum_{s\in {\mathbb N}}\epsilon_{js}^d\le \h^{d}(A_j)+\min\{\frac1{5m}\h^{d}(X),\frac{c}{5m}\tilde\eta^d\}, \forall j\le m$.

The balls $B(y_{js},\epsilon_{js})$ form a covering of $X$ from which we extract a finite subcovering (whose elements we denote provisionally by $B(x_i,\delta_i)$ - we will abuse of this notation: in what follows, we will modify the covering but we will keep the notation). If, for some $x\in X$ and some $r<\tilde\eta$, there are balls $B(x_i,\delta_i)$ from this subcovering contained in $B(x,r)$ for which $\sum \delta_i^d>r^d$, we replace all these balls $B(x_i,\delta_i)$ by $B(x,r)$. Doing this, the total number of balls in the covering and the sum of the $d$-th power of their radii decrease, and we obtain a new covering (whose elements we still denote by $B(x_i,\delta_i)$). Now, and until it is possible, we do the same procedure, which reduces the number of balls in the covering. After a finite number of steps, we arrive at a finite covering of $X$ by balls $B(x_i,\delta_i)$, $1\le i\le N$ with $\delta_i<\tilde\eta$ for all $i\in \{1,\dots, N\}$ such that, for every $x\in X$ and every $r<\tilde\eta$, the sum of the $d$-th power of the radii of the balls of the covering contained in $B(x,r)$ never exceeds $r^d$. By construction, we may associate to each such ball $B(x_i,\delta_i)$ a nonempty finite collection (perhaps with one element) of balls of the type $B(y_{js},\epsilon_{js})$ contained on it (perhaps $B(x_i,\delta_i)$ itself is of this form, otherwise we take the collection of balls of this form originally deleted in the process of creation of $B(x_i,\delta_i)$) with $\sum \epsilon_{js}^d\ge \delta_i^d$. Each ball of the type $B(y_{js},\epsilon_{js})$ belongs to at most one of these collections. Notice that, since $\h^d_{\epsilon}(X)>\frac{3}{4}\h^d(X)$, we have $\sum_{i=1}^{N}\delta_i^{d}>\frac{3}{4}\h^{d}(X)$.

Now, given $x\in X$ and $r\ge \tilde\eta$, if a ball $B(x_i,\delta_i)$ from the above covering is contained in $B(x,r)$, for each ball $B(y_{js},\epsilon_{js})$ from the collection associated to $B(x_i,\delta_i)$, we have $A_j\cap B(x,r)\ne\emptyset$, and thus, since $A_j\subset B(z_j, \gamma_j)$ and $\gamma_j<\tilde\eta\le r$, we have $A_j\subset B(x,3r)$. Therefore, since $\delta_i^d$ is at most the sum for the balls of its associated collection of $\epsilon_{js}^d$, and each ball of the type $B(y_{js},\epsilon_{js})$ belongs to at most one of these collections, the sum $\sum \delta_i^d$ for all such values of $i$ is at most 
$$\sum_{A_j\subset B(x,3r)}\sum_{s\in {\mathbb N}}\epsilon_{js}^d\le\sum_{A_j\subset B(x,3r)}(\h^{d}(A_j)+\frac{c}{5m}\tilde\eta^d)\le$$
$$\le\sum_{A_j\subset B(x,3r)}\h^{d}(A_j)+\frac{c}{5}\tilde\eta^d\le \h^{d}(X\cap B(x,3r))+\frac{c}{5}r^d\le \frac{6}{5}\cdot c\cdot (3r)^d=\hat C r^d,$$ 
with $\hat C:=\frac{6}{5}\cdot c\cdot 3^d$, since $\sum_{s\in {\mathbb N}}\epsilon_{js}^d\le \h^{d}(A_j)+\frac{c}{5m}\tilde\eta^d\}$ for all $j$.

Finally, we have 
$$\sum_{i=1}^{N}\delta_i^{d}\le \sum_{j\le m, s\in{\mathbb N}}\epsilon_{js}^d\le\sum_{j\le m}(\h^{d}(A_j)+\frac{1}{5m}\h^{d}(X))=$$	
$$=\sum_{j\le m}\h^{d}(A_j)+\frac{1}{5}\h^{d}(X)=\frac{6}{5}\h^{d}(X)<\frac{5}{4}\h^{d}(X).$$
\end{proof}

\begin{proof}[\emph{\textbf{Proof of Theorem \ref{thm_Marstrand2}}}]

(1) Take $\eta\in (0,1)$ small. Since $X$ is a compact set, we can consider a covering of $X$ by $N$ balls $B(x_i,\delta_i)$, $i=1,\dots,	N$ with $\delta_i\leq \eta$ for all $i$ as in Lemma \ref{Lemma2}. We define the following family of functions $f_{\lambda}\colon \re^k\to \re$, for $\lambda\in\Lambda$: 
$$f_{\lambda}(y)=\sum_{i=1}^{N}\delta_i^{d-\alpha k}\cdot\chi_{B(\pi_{\lambda}(x_i),\delta_i^{\alpha})}(y).$$
(Here we make an abuse of notation: it would be more precise to write $f_{\lambda}^{(\eta)}(y)$, or, even more correctly, $f_{\lambda}^{((B(x_i,\delta_i))_{1\le i\le N})}(y)$ instead of $f_{\lambda}(y)$, but we have chosen to make the notation simpler.)  

It is easy to see that $\text{supp}(f_{\lambda})\subset  V_{\eta^{\alpha}}\left( \pi_{\lambda}(X)\right)$, the neighborhood of radius $\eta^{\alpha}$ of $\pi_{\lambda}(X)$.

\noindent The idea now is to show that $\ds \int_{\Lambda}\int f_{\lambda}^{2}dyd\lambda<+\infty$, so we have that  $\ds \int f_{\lambda}^{2}dy<+\infty$ almost every $\lambda\in\Lambda$.
In fact: \\
\begin{equation*}\label{eq-pt}
\ds \int f_{\lambda}^{2}dy=\sum_{i=1}^{N}\delta_{i}^{2(d-\alpha k)}\int \chi_{B(\pi_{\lambda}(x_i),\delta_i^{\alpha})}(y)dy+2\sum_{i<j}(\delta_i\delta_j)^{d-\alpha k}\int \chi_{B(\pi_{\lambda}(x_i),\delta_i^{\alpha})\cap B(\pi_{\lambda}(x_j),\delta_j^{\alpha})}(y)dy.
\end{equation*}
The first part of the right side of the above inequality is bounded (which implies that its integral over $\lambda$ is also bounded). Indeed, if $w_k$ is the volume of the unit ball in $\re^k$, we have $\int \chi_{B(\pi_{\lambda}(x_i),\delta_i^{\alpha})}(y)dy=w_k\delta_i^{\alpha k}, \forall i\le N$, so $\sum_{i=1}^{N}\delta_{i}^{2(d-\alpha k)}\int \chi_{B(\pi_{\lambda}(x_i),\delta_i^{\alpha})}(y)dy=w_k\sum_{i=1}^{N}\delta_{i}^{2d-\alpha k}\le w_k\eta^{d-\alpha k}\sum_{i=1}^{N}\delta_{i}^{d}\le w_k\sum_{i=1}^{N}\delta_{i}^{d}\le \frac{5}{4}w_k\h^{d}(X)$ ( by (1) of Lemma \ref{Lemma2} and using $d>\alpha k$).

Let us now estimate the integral on $\lambda$ for the second part of the right side of the above inequality.\\ We use the notation $B_i:=B(\pi_{\lambda}(x_i),\delta_i^{\alpha})$, thus, assuming (by reordering, if necessary) that the sequence of the $\delta_{i}$'s is decreasing, we have 
\begin{eqnarray*}
\ds \int_{\Lambda}\, \, \sum_{i<j}(\delta_i\delta_j)^{d-\alpha k}\int \chi_{B_i(\lambda)\cap B_j(\lambda)}(y)dyd\lambda &\leq & w_k\sum_{i<j}(\delta_i\delta_j)^{d-\alpha k}\min\{\delta_i,\delta_j\}^{\alpha k}\cdot\p(\{\lambda: B_i\cap B_j\neq \emptyset \})\nonumber \\
&=&w_k\sum_{i<j}\delta_j^{d}\delta_i^{d-\alpha k}\cdot\p(\{\lambda: B_i\cap B_j\neq \emptyset \}).
\end{eqnarray*}
Note that by the transversality condition (\ref{transversality2}), and since $i<j$, we have that 
$$\p(\{\lambda: B_i\cap B_j\neq \emptyset \})\leq \p(\{\lambda: \pi_{\lambda}(x_j)\in B(\pi_{\lambda}(x_i),2\delta_i^{\alpha})\})\leq$$
$$\leq C\cdot \text{min}\{1,\frac{2\delta_i^{\alpha}}{d(x_i,x_j)^{\alpha}}\}\leq C\left(\frac{2\delta_i^{\alpha}}{\text{max}\{\delta_i,d(x_i,x_j)\}^\alpha} \right)^{k}.$$
Therefore, 
\begin{eqnarray}\label{eq1-pt}
\ds \int_{\Lambda}\, \, \sum_{i<j}(\delta_i\delta_j)^{d-\alpha k}\int \chi_{B_i(\lambda)\cap B_j(\lambda)}(y)dyd\lambda &\leq& C\cdot 2^k w_k \sum_{i<j}\frac{\delta_j^d \delta_i^{d-\alpha k}\delta_i^{\alpha k}}{\text{max}\{\delta_i,d(x_i,x_j)\}^{\alpha k}} \nonumber \\
&=& C\cdot 2^k w_k\sum_{i<j}\frac{\delta_i^d \delta_j^{d}}{\text{max}\{\delta_i,d(x_i,x_j)\}^{\alpha k}}.
\end{eqnarray}
The next step is to estimate the sum of the right side of the previous inequality. For this, we assume that $\text{diam}(X)\leq 2^{m_0}$ (for some $m_0\in\mathbb N$) and remember that $d>\alpha k$. Moreover, we have
\begin{eqnarray*}
\sum_{i<j}\frac{\delta_i^d \delta_j^{d}}{\text{max}\{\delta_i,d(x_i,x_j)\}^{\alpha k}}&=&\sum_{i=1}^{N}\delta_i^{d}\sum_{r=-m_0+1}^{\infty}\sum_{\substack{i<j ,\\2^{-r}<d(x_i,x_j)\leq 2^{1-r}}}\frac{ \delta_j^{d}}{\text{max}\{\delta_i,d(x_i,x_j)\}^{\alpha k}}\\
&\leq & \sum_{i=1}^{N}\delta_i^{d}\left(\sum_{r=-m_0+1}^{\lceil -\log\delta_i/\log 2\rceil}2^{\alpha k r}\sum_{\substack{i<j ,\\ d(x_i,x_j)\leq 2^{1-r}}}\delta_j^{d}+\delta_i^{-\alpha k}\sum_{\substack{i<j ,\\d(x_i,x_j)\leq \delta_i}}\delta_j^{d}\right).
\end{eqnarray*}
(indeed, if $r\ge \lceil -\log\delta_i/\log 2\rceil+1$ and $d(x_i,x_j)\leq 2^{1-r}$, we have $d(x_i,x_j)\le 2^{-\lceil -\log\delta_i/\log 2\rceil}\le\delta_i$, so $\text{max}\{\delta_i,d(x_i,x_j)\}=\delta_i$.)
Since $i<j$ and $d(x_i,x_j)\leq \delta_i$ imply $B(x_j,\delta_j)\subset B(x_i,2\delta_i)$, by (2) of Lemma \ref{Lemma2} we have $\displaystyle\sum_{\substack{i<j ,\\d(x_i,x_j)\leq \delta_i}}\delta_j^{d}<\hat C(2\delta_i)^d$, and thus 
$$\sum_{i=1}^{N}\delta_i^{d-\alpha k}\sum_{\substack{i<j ,\\d(x_i,x_j)\leq \delta_i}}\delta_j^{d}\le 2^d\hat C\sum_{i=1}^{N}\delta_i^{2d-\alpha k}\le 2^d\hat C\eta^{d-\alpha k}\sum_{i=1}^{N}\delta_{i}^{d}\le 2^d\hat C\sum_{i=1}^{N}\delta_{i}^{d}\le \frac{5}{4}2^d\hat C\h^{d}(X)$$ (by (1) of Lemma \ref{Lemma2}).
On the other hand, again by (2) and (1) of Lemma \ref{Lemma2}, we have 
$$\sum_{i=1}^{N}\delta_i^{d}\sum_{r=-m_0+1}^{\lceil -\log\delta_i/\log 2\rceil}2^{\alpha k r}\sum_{\substack{i<j ,\\ d(x_i,x_j)\leq 2^{1-r}}}\delta_j^{d}\leq \sum_{i=1}^{N}\delta_i^{d}\sum_{r=-m_0+1}^{\lceil -\log\delta_i/\log 2\rceil}2^{\alpha k r}\hat{C}(2^{2-r})^{d}\leq $$
$$\leq \frac{5}{4}\h^{d}(X)\hat{C}2^{2d}\sum_{r=-m_0+1}^{\infty}2^{-r(d-\alpha k )}\leq \frac{5}{4}\hat{C}2^{2d}\h^{d}(X)\frac{2^{(m_0-1)(d-\alpha k)}}{1-2^{\alpha k-d}}<+\infty.$$

Thus, we conclude that $\ds \int_{\Lambda}\int f_{\lambda}^{2}dyd\lambda\leq B<+\infty$ (notice that $B$ does not depend on the covering of X by the balls $B(x_j,\delta_j)$). It follows that, for every $\tau>0$, $\mathcal{P}(\{\lambda:\int f_{\lambda}^{2}dy>\tau^{-1}\})\leq B\tau$.\\
\ \\
Given a positive integer $n$, take, in the beginning of the proof, $\eta=\frac{1}{n}$ in the definition of $f_{\lambda}$, which we now denote by $f_{\lambda}^{[n]}$. For every $\tau>0$, there is a measurable set $\Lambda_n(\tau)\subset \Lambda$ such that $\mathcal{P}(\Lambda_n(\tau))\geq 1-B\tau$ and for $\lambda\in \Lambda_n(\tau)$ we have $\int (f_{\lambda}^{[n]})^{2}dy\leq \tau^{-1}$. Therefore we have $\mathcal{P}(\limsup \Lambda_n(\tau))\geq 1-B\tau$, where $\limsup \Lambda_n(\tau)=\{\lambda:\lambda\in \Lambda_n(\tau) \ \text{for infinitely many indices} \ n\}$, and, given $\lambda\in\limsup \Lambda_n(\tau)$, we have $\int (f_{\lambda}^{[n]})^{2}dy\leq \tau^{-1}$ for infinitely many values of $n$. \\
\noindent By the Cauchy-Schwarz inequality we have 
$$\ds\left( \int f_{\lambda}^{[n]}(y)dy\right)^{2}=\left( \int f_{\lambda}^{[n]}\cdot \chi_{\text{supp}(f_{\lambda}^{[n]})}dy\right)^{2}\leq Leb(\text{supp}(f_{\lambda}^{[n]})) \cdot\int (f_{\lambda}^{[n]})^{2}dy.$$
However, the first condition of Lemma \ref{Lemma2} gives us $$\ds \int f_{\lambda}^{[n]}dy=w_k\sum_{i}^{N}\delta_{i}^{d-\alpha k}\delta_{i}^{\alpha k}=w_k\sum_{i=1}^{N}\delta_{i}^{d}> \frac{3}{4}w_k\h^{d}(X)=:\tilde{B}>0,$$
 and therefore we get that if 
$\lambda \in \limsup \Lambda_n(\tau)$, then $Leb(\text{supp}(f_{\lambda}^{[n]}))\geq \tilde{B}^2\tau>0$ for infinitely many indices $n$, which implies that $Leb(\pi_\lambda(X))>0$. Finally, taking $\tau=\frac{1}{m}$, we have that 
$$\ds\mathcal{P}\left(\bigcup_{m\geq 1}\limsup \Lambda_n(1/m)\right)=1$$ 
and $Leb(\pi_\lambda(X))>0$ for any $\ds\lambda\in \bigcup_{m\geq 1}\limsup \Lambda_n(1/m)$. 

Let us remark that the above proof can be used to show that, given a subset $Y\subset X$ with $\h^d(Y)>0$, we also have $Leb(\pi_\lambda(Y))>0$ for almost all $\lambda\in\Lambda$. Indeed, by Lemma \ref{Howroyd}, me may assume that $Y$ is compact.  We may define $g_{\lambda}\colon \re^k\to \re$: 
$$g_{\lambda}(y)=\sum_{\substack{1\le i\le N \\ B(\pi_{\lambda}(x_i),\delta_i)\cap Y\ne \emptyset}}\delta_i^{d-\alpha k}\cdot\chi_{B(\pi_{\lambda}(x_i),\delta_i^{\alpha})}(y).$$
We have $g_{\lambda}(y)\le f_{\lambda}(y), \forall y\in \re^k$, so $\ds \int_{\Lambda}\int g_{\lambda}^{2}dyd\lambda\leq B<+\infty$.
Again by the Cauchy-Schwarz inequality we have 
$$\ds\left(  \int g_{\lambda}dy\right)^{2}\leq Leb(\text{supp}(g_{\lambda})) \cdot\int g_{\lambda}^{2}dy,$$
and we also have, as before, 
$$\ds \int g_{\lambda}dy=w_k\sum_{\substack{1\le i\le N \\ B(\pi_{\lambda}(x_i),\delta_i)\cap Y\ne \emptyset}}\delta_{i}^{d-\alpha k}\delta_{i}^{\alpha k}=w_k\sum_{\substack{1\le i\le N \\ B(\pi_{\lambda}(x_i),\delta_i)\cap Y\ne \emptyset}}\delta_{i}^{d}> \frac{3}{4}w_k\h^{d}(Y)>0,$$
provided $\eta$ is small enough. So we conclude as before that $Leb(\pi_\lambda(Y))>0$ for any $\ds\lambda\in \bigcup_{m\geq 1}\limsup \Lambda_n(1/m)$. 

This finishes the proof of the first part of the theorem. \\
\ \\

(2)
We will prove that, for every $\varepsilon>0$, there is a full measure subset $\Lambda_{\varepsilon}$ of $\Lambda$ such that, for every $\lambda\in \Lambda_{\varepsilon}$, $HD(\pi_{\lambda}(X))>\frac{HD(X)}{\alpha}-\varepsilon$. It follows that $\Lambda':=\cap_{m=1}^{\infty}\Lambda_{1/m}$ is a full measure subset of $\Lambda$ such that, for every $\lambda\in \Lambda'$, $HD(\pi_{\lambda}(X))\ge \frac{HD(X)}{\alpha}$.

\noindent For $0<\beta<\frac{1}{2}$ consider the homogeneous Cantor set given by
$$K_{\beta}=\{(1-\beta)\sum_{k=0}^{\infty}a_k \beta^k; a_k\in\{0,1\}, \forall k\ge 0\},$$
whose Hausdorff dimension coincides with its (upper) box dimension, and is given by $\frac{\log 2}{|\log \beta|}$. \\

Choose $0<\beta<\frac{1}{2}$ and a positive integer $r$ such that $\beta^{\alpha}<\frac{1}{2}$ and $k<\frac1{\alpha}(HD(X)+r\cdot\frac{\log 2}{|\log \beta|})<k+\varepsilon$. Consider the homeomorphism $g:K_{\beta}\to K_{\beta^{\alpha}}$ given by 
$$g((1-\beta)\sum_{k=0}^{\infty}a_k \beta^k)=(1-\beta^{\alpha})\sum_{k=0}^{\infty}a_k \beta^{\alpha k}; a_k\in\{0,1\}, \forall k\ge 0.$$
There are constants $a, b>0$ such that $a|x-y|^{\alpha}\le |g(x)-g(y)|\le b|x-y|^{\alpha}, \forall x, y\in K_{\beta}$.

Let $B=\{v\in {\mathbb R}^k; |v|\le 1\}$ (provided with the normalized Lebesgue measure), $\tilde\Lambda=\Lambda\times B^r$ and $\tilde\pi:\tilde\Lambda\times X\times K_{\beta}^r\to {\mathbb R}^k$ as follows: given $\tilde\lambda=(\lambda,v_1,v_2,\dots, v_r)\in\tilde\Lambda$ and $(x,t_1,t_2,\dots,t_r)\in X\times K_{\beta}^r$, we define $\tilde\pi_{\tilde\lambda}(x,t_1,t_2,\dots,t_r)=\tilde\pi(\tilde\lambda,x,t_1,t_2,\dots,t_r):=\pi_{\lambda}(x)+\sum_{j=1}^r g(t_j)v_j$. We consider in $X\times K_{\beta}^r$ the distance $\tilde d((x,t_1,t_2,\dots,t_r),(x',t_1',t_2',\dots,t_r')):=\text{max}\{d(x,x'),|t_j-t_j'|, 1\le j\le r\}$. We will prove that $\tilde\pi$ satisfies the transversality condition (for the exponent $\alpha$ and a suitable positive constant $\tilde C$), and therefore, since $HD(X\times K_{\beta}^r)=HD(X)+r\frac{\log 2}{|\log \beta|}>\alpha k$, 
we conclude, using the case (1) of the theorem that, for almost every $\tilde\lambda=(\lambda,v_1,v_2,\dots, v_r)\in\tilde\Lambda$, $\tilde\pi_{\tilde\lambda}(X\times K_{\beta}^r)$ has positive Lebesgue measure in ${\mathbb R}^k$ (and thus Hausdorff dimension $k$). In particular, for almost every $\lambda\in\Lambda$, there is $(v_1,v_2,\dots, v_r)\in B^r$ such that, if $H(s_1,s_2,\dots,s_r):=\sum_{j=1}^r s_j v_j$, $\pi_{\lambda}(X)+H(K_{\beta^{\alpha}}^r)=\tilde\pi_{(\lambda,v_1,v_2,\dots,v_r)}(X\times K_{\beta}^r)$ has Hausdorff dimension $k$. 
Notice that $H$ is a Lipschitz map (so the map $\tilde H:{\mathbb R}^k\times K_{\beta^{\alpha}}^r\to {\mathbb R}^k$ given by $\tilde H(y,s_1,s_2,\dots,s_r):=y+H(s_1,s_2,\dots,s_r)$ is also Lipschitz), and $\tilde H(\pi_{\lambda}(X)\times K_{\beta^{\alpha}}^r)=\tilde\pi_{(\lambda,v_1,v_2,\dots,v_r)}(X\times K_{\beta}^r)$ has Hausdorff dimension $k$. This implies that the cartesian product $\pi_{\lambda}(X)\times K_{\beta^{\alpha}}^r$ has Hausdorff dimension at least $k$. Since the box dimension of $K_{\beta^{\alpha}}^r$ is $\frac{r}{\alpha}\cdot \frac{\log 2}{|\log \beta|}$, it follows that $HD(\pi_{\lambda}(X))\ge k-\frac{r}{\alpha}\cdot \frac{\log 2}{|\log \beta|}>\frac{HD(X)}{\alpha}-\varepsilon$, which will conclude the proof. 

Let us now prove the transversality condition. Given $y=(x,t_1,t_2,\dots,t_r),y'=(x',t_1',t_2',\dots,t_r')\in X\times K_{\beta}^r$ and $\delta\in (0,1)$, we have 3 cases:

(i) $\text{max}\{|t_j-t_j'|, 1\le j\le r\}\le \delta^{1/\alpha}d(x,x')$.

In this case, $|\sum_{j=1}^r g(t_j)v_j-\sum_{j=1}^r g(t_j')v_j|\le r\cdot \text{max}\{|g(t_j)-g(t_j')|, 1\le j\le r\}\le rb\delta\cdot d(x,x')^{\alpha}$. On the other hand, by the transversality condition, the probability in $\Lambda$ that $d(\pi_{\lambda}(x),\pi_{\lambda}(x'))\le (1+rb)\delta\cdot d(x,x')^{\alpha}$ is at most $C((1+rb)\delta)^k=C(1+rb)^k\delta^k$. If $d(\pi_{\lambda}(x),\pi_{\lambda}(x'))>(1+rb)\delta\cdot d(x,x')^{\alpha}$, then 
$$|\pi_{\lambda}(x)+\sum_{j=1}^r g(t_j)v_j-(\pi_{\lambda}(x')+\sum_{j=1}^r g(t_j')v_j)|>(1+rb)\delta\cdot d(x,x')^{\alpha}-rb\delta\cdot d(x,x')^{\alpha}=\delta\cdot d(x,x')^{\alpha}.$$
As a consequence, since $d(y,y')=d(x,x')$ in this case, we have 
$$\p[\tilde\la\in \tilde\Lambda: d(\pi_{\tilde\la}(y),\pi_{\la}(y'))\leq\delta d(y,y')^{\alpha}]=\p[\tilde\la\in \tilde\Lambda: d(\pi_{\tilde\la}(x),\pi_{\la}(x'))\leq\delta d(y,y')^{\alpha}]\leq (1+rb)^k\delta^{k}.$$

(ii) $\text{max}\{|t_j-t_j'|, 1\le j\le r\}\ge d(x,x')$.

In this case, fix $i\le r$ such that $|t_i-t_i'|=\text{max}\{|t_j-t_j'|, 1\le j\le r\}\ge d(x,x')$. Therefore $|t_i-t_i'|=\tilde d((x,t_1,t_2,\dots,t_r),(x',t_1',t_2',\dots,t_r'))$ (and $|g(t_i)-g(t_i')|\ge a\cdot |t_i-t_i'|^{\alpha}$). Given \break $\lambda\in\Lambda, v_1, v_2,\dots,v_{i-1},v_{i+1},\dots,v_r\in B$, we have $|\pi_{\lambda}(x)+\sum_{j=1}^r g(t_j)v_j-(\pi_{\lambda}(x')+\sum_{j=1}^r g(t_j')v_j)|\le \delta|t_i-t_i'|^{\alpha}$ if and only if $|v_i-u|\le \delta|t_i-t_i'|^{\alpha}/|g(t_i)-g(t_i')|$, where \hfill\break $u:=(\sum_{1\le j\le r, j\ne i}(g(t_j')-g(t_j))v_j+\pi_{\lambda}(x')-\pi_{\lambda}(x))/(g(t_i)-g(t_i'))$ (this is only a rephrasing). The probability (in $B$) that this happens is
$\text{vol}(B(u,\delta|t_i-t_i'|^{\alpha}/|g(t_i)-g(t_i')|)\cap B)/\text{vol}(B)\le$ \hfill\break 
$\le\text{vol}(B(u,\delta|t_i-t_i'|^{\alpha}/|g(t_i)-g(t_i')|))/\text{vol}(B)=(\delta|t_i-t_i'|^{\alpha}/|g(t_i)-g(t_i')|)^k\le (\delta/a)^k$. By Fubini's theorem, the probability in $\tilde\Lambda$ that $|\pi_{\lambda}(x)+\sum_{j=1}^r g(t_j)v_j-(\pi_{\lambda}(x')+\sum_{j=1}^r g(t_j')v_j)|\le \delta\tilde d((x,t_1,t_2,\dots,t_r),(x',t_1',t_2',\dots,t_r'))^{\alpha}=\delta|t_i-t_i'|^{\alpha}$ is at most $(\delta/a)^k$.

(iii) $\delta^{1/\alpha}d(x,x')<\text{max}\{|t_j-t_j'|, 1\le j\le r\}<d(x,x')$.

In this case, we have $\tilde d((x,t_1,t_2,\dots,t_r),(x',t_1',t_2',\dots,t_r'))=d(x,x')$. Let $s\in\mathbb N$ such that $2^{-s-1}d(x,x')<\text{max}(|t_j-t_j'|, 1\le j\le r)\le 2^{-s}d(x,x')$. We have $2^{-s}>\delta^{1/\alpha}$ and $|\sum_{j=1}^r g(t_j)v_j-\sum_{j=1}^r g(t_j')v_j|\le r\cdot \text{max}(|g(t_j)-g(t_j')|, 1\le j\le r)\le rb2^{-s\alpha}\cdot d(x,x')^{\alpha}$. The probability in $\Lambda$ that $d(\pi_{\lambda}(x),\pi_{\lambda}(x'))\le (1+rb)2^{-s\alpha}\cdot d(x,x')^{\alpha}$ is at most $C((1+rb)2^{-s\alpha})^k=C(1+rb)^k2^{-sk\alpha}$. If $d(\pi_{\lambda}(x),\pi_{\lambda}(x'))>(1+rb)2^{-s\alpha}\cdot d(x,x')^{\alpha}$, then 
$|\pi_{\lambda}(x)+\sum_{j=1}^r g(t_j)v_j-(\pi_{\lambda}(x')+\sum_{j=1}^r g(t_j')v_j)|>(1+rb)2^{-s\alpha}\cdot d(x,x')^{\alpha}-rb2^{-s\alpha}\cdot d(x,x')^{\alpha}=2^{-s\alpha}\cdot d(x,x')^{\alpha}>\delta\cdot d(x,x')^{\alpha}$.

Therefore, if we have $|\pi_{\lambda}(x)+\sum_{j=1}^r g(t_j)v_j-(\pi_{\lambda}(x')+\sum_{j=1}^r g(t_j')v_j)|\le\delta\cdot d(x,x')^{\alpha}$, we should have $d(\pi_{\lambda}(x),\pi_{\lambda}(x'))\le (1+rb)2^{-s\alpha}\cdot d(x,x')^{\alpha}$. Fix $i\le r$ such that $|t_i-t_i'|=\text{max}(|t_j-t_j'|, 1\le j\le r)>2^{-s-1}d(x,x')$. We have $|g(t_i)-g(t_i')|\ge a\cdot |t_i-t_i'|^{\alpha}>a2^{-(s+1)\alpha}d(x,x')^{\alpha}$. Given $\lambda\in\Lambda, v_1, v_2,\dots,v_{i-1},v_{i+1},\dots,v_r\in B$, we have $|\pi_{\lambda}(x)+\sum_{j=1}^r g(t_j)v_j-(\pi_{\lambda}(x')+\sum_{j=1}^r g(t_j')v_j)|\le \delta d(x,x')^{\alpha}$ if and only if $|v_i-u|\le \delta d(x,x')^{\alpha}/|g(t_i)-g(t_i')|$, where $u:=(\sum_{1\le j\le r, j\ne i}(g(t_j')-g(t_j))v_j+\pi_{\lambda}(x')-\pi_{\lambda}(x))/(g(t_i)-g(t_i'))$. The probability (in $B$) that this happens is
$\text{vol}(B(u,\delta d(x,x')^{\alpha}/|g(t_i)-g(t_i')|)\cap B)/\text{vol}(B)\le$ \hfill\break 
$\le\text{vol}(B(u,\delta d(x,x')^{\alpha}/|g(t_i)-g(t_i')|))/\text{vol}(B)=(\delta d(x,x')^{\alpha}/|g(t_i)-g(t_i')|)^k\le (2^{(s+1)\alpha}\delta/a)^k$. So, the probability in $\Lambda\times B$ that, simultaneously, $d(\pi_{\lambda}(x),\pi_{\lambda}(x'))\le (1+rb)2^{-s\alpha}\cdot d(x,x')^{\alpha}$ and $|v_i-u|\le \delta d(x,x')^{\alpha}/|g(t_i)-g(t_i')|$ is at most $C(1+rb)^k2^{-sk\alpha}\times (2^{(s+1)\alpha}\delta/a)^k=C((1+rb)/a)^k2^{k\alpha}\delta^k$. By Fubini's theorem, the probability in $\tilde\Lambda$ that $|\pi_{\lambda}(x)+\sum_{j=1}^r g(t_j)v_j-(\pi_{\lambda}(x')+\sum_{j=1}^r g(t_j')v_j)|\le \delta\tilde d(((x,t_1,t_2,\dots,t_r),(x',t_1',t_2',\dots,t_r'))^{\alpha}$ is at most $C((1+rb)/a)^k2^{k\alpha}\delta^k$.

This concludes the proof, with $\tilde C:=\text{max}\{C(1+rb)^k, 1/a^k, C((1+rb)/a)^k2^{k\alpha}\}$.
\end{proof}

\begin{cor}
Under the conditions of theorem \ref{thm_Marstrand2}, part \emph{(2)}, if $\pi_{\lambda}$ is $\alpha$-H\"older for every $\lambda\in\Lambda$, then $HD(\pi_{\lambda}(X))=\frac{HD(X)}{\alpha}$ for almost every $\lambda\in\Lambda$.
\end{cor}

\begin{proof}[\emph{\textbf{Proof of Theorem \ref{thm_L2}}}]

(1) In the remark at the end of the proof of part one of Theorem \ref{thm_Marstrand2}, we proved that there is a full measure subset $\Lambda'\subset\Lambda$ such that if $Y\subset X$ and $\h^d(Y)>0$, then $Leb(\pi_{\lambda}(Y))>0$ for every $\lambda\in \Lambda'$. Since $\mu_{\lambda}=(\pi_{\lambda})_*(\h^d|_{X'})$, given $A\subset \re^k$ with $Leb(A)=0$, we have $\mu_{\lambda}(A)=\h^d(\pi_{\lambda}^{-1}(A)\cap X')=0$, otherwise $Leb(A)=Leb(\pi_{\lambda}(\pi_{\lambda}^{-1}(A)\cap X'))>0$, a contradiction. It follows that the measure $\mu_{\lambda}$ is absolutely continuous with respect to the Lebesgue measure of ${\mathbb R}^k$.

(2) Given $\lambda\in\Lambda'$, let $h_{\lambda}=d\mu_{\lambda}/dLeb$. We define, for every $\varepsilon>0$, $h_{\lambda,\varepsilon}:\re^k\to\re$ by
$$h_{\lambda,\varepsilon}(x)=\frac1{w_k \varepsilon^k}\int_{B(x,\varepsilon)}h_{\lambda}(y)dy.$$
Since $h_{\lambda}$ is in $L^1$, the Lebesgue differentiation theorem implies that $h_{\lambda}(x)=\displaystyle\lim_{\varepsilon\to 0}h_{\lambda,\varepsilon}(x)$ for almost every $x\in\re^k$.

Using the estimates of the previous theorem, we will prove that, for almost every $\lambda\in \Lambda$, $||h_{\lambda,\varepsilon}||_{L^2}$ is bounded by a constant that does not depend on $\varepsilon$, and so it follows from Fatou's lemma that, for almost every $\lambda\in \Lambda$, $h_{\lambda}$ is $L^2$ (indeed $||h_{\lambda}||_{L^2}$ will be bounded by the same constant).

In order to do this, notice that 
$$h_{\lambda,\varepsilon}(x)=\frac1{w_k \varepsilon^k}\int_{B(x,\varepsilon)}h_{\lambda}(y)dy=\frac1{w_k \varepsilon^k}\h^d(\pi_{\lambda}^{-1}(B(x,\varepsilon))\cap X').$$
In order to estimate the last term, for every positive integer $n$, consider a covering of $X'$ by balls $B(x_j,\delta_j)$ as in the end of the proof of case (1) of the previous theorem, with norm smaller than $1/n$. If $1/n<\varepsilon$, we have:
\begin{eqnarray*}
\h^d(\pi_{\lambda}^{-1}(B(x,\varepsilon))\cap X')&\le & \sum_{\substack{1\le j\le N,\\ \pi_{\lambda}(B(x_j,\delta_j))\subset B(x,2\varepsilon)}}\h^d(B(x_j,\delta_j)\cap X')\\
&\le & \sum_{\substack{1\le j\le N,\\ \pi_{\lambda}(B(x_j,\delta_j))\subset B(x,2\varepsilon)}}c(2\delta_j)^d\\
&=&\frac{c\cdot 2^d}{w_k}\sum_{\substack{1\le j\le N,\\ \pi_{\lambda}(B(x_j,\delta_j))\subset B(x,2\varepsilon)}}\delta_j^{d-\alpha k}Leb(B(\pi_{\lambda}(x_j),\delta_j^{\alpha})\\
&\le &\frac{c\cdot 2^d}{w_k}\int_{B(x,2\varepsilon)}f_{\lambda}^{(n)}(y)dy,
\end{eqnarray*}
where $f_{\lambda}^{(n)}(y)=\sum_{i=1}^{N}\delta_i^{d-\alpha k}\cdot\chi_{B(\pi_{\lambda}(x_i),\delta_i^{\alpha})}(y)$, as in the previous theorem.


\noindent By the Cauchy-Schwarz inequality, we have 
$$|h_{\lambda,\varepsilon}(x)|^2=\frac1{(w_k \varepsilon^k)^2}\h^d(\pi_{\lambda}^{-1}(B(x,\varepsilon))\cap X')^2\le \frac{c^2\cdot 2^{2d}}{w_k^4\varepsilon^{2k}}\left(\int_{B(x,2\varepsilon)}f_{\lambda}^{(n)}(y)dy\right)^2\le$$
$$\le \frac{c^2\cdot 2^{2d}}{w_k^4\varepsilon^{2k}}\cdot w_k(2\varepsilon)^k\int_{B(x,2\varepsilon)}f_{\lambda}^{(n)}(y)^2dy=\frac{c^2\cdot 2^{2d+k}}{w_k^3\varepsilon^{k}}\int_{B(x,2\varepsilon)}(f_{\lambda}^{(n)}(y))^2dy.$$

It follows that
$$||h_{\lambda,\varepsilon}||_{L^2}^2=\int|h_{\lambda,\varepsilon}(x)|^2dx\le \frac{c^2\cdot 2^{2d+k}}{w_k^3\varepsilon^{k}}\int\int_{B(x,2\varepsilon)}(f_{\lambda}^{(n)}(y))^2dydx=$$
$$=\frac{c^2\cdot 2^{2d+k}}{w_k^3\varepsilon^{k}}\int\int(f_{\lambda}^{(n)}(y))^2\chi_{B(x,2\varepsilon)}dydx=\frac{c^2\cdot 2^{2d+k}}{w_k^3\varepsilon^{k}}w_k(2\varepsilon)^k\int f_{\lambda}^{(n)}(y)^2dy,$$
which is equal to $\frac{c^2\cdot 2^{2d+2k}}{w_k^2}||f_{\lambda}^{(n)}||_{L^2}^2$. 
In the end of the proof of case (1) of the previous theorem, we showed that, for every $\tau>0$, there is a set $\Lambda(\tau):=\limsup \Lambda_n(\tau)$ such that $\mathcal{P}(\Lambda(\tau))\geq 1-B\tau$ and, for $\lambda\in\Lambda(\tau)$, there are infinitely many positive integers $n$ for which $||f_{\lambda}^{(n)}||_{L^2}^2=\int f_{\lambda}^{(n)}(y)^2dy\le \tau^{-1}$. It follows from the previous inequality that, for $\lambda\in\Lambda(\tau)$, $||h_{\lambda,\varepsilon}||_{L^2}^2\le \frac{c^2\cdot 2^{2d+2k}}{w_k^2\tau}$.

On the other hand, we have $\int_{\Lambda}||f_{\lambda}^{(n)}||_{L^2}^2d\lambda=\int_{\Lambda}\int (f_{\lambda}^{(n)}(y))^{2}dyd\lambda\leq B$ for every positive integer $n$, so, for almost every $\lambda\in\Lambda$ (more especifically, for every $\lambda\in\cup_{m\ge 1}\Lambda(1/m)$), we have $$\int||h_{\lambda,\varepsilon}||_{L^2}^2d\lambda\le\frac{c^2\cdot 2^{2d+2k}}{w_k^2}\int||f_{\lambda}^{(n)}||_{L^2}^2d\lambda\le\frac{Bc^2\cdot 2^{2d+2k}}{w_k^2},$$
from which we conclude, again by Fatou's lemma, that 
$$\int_{\Lambda}||h_{\lambda}||_{L^2}^2d\lambda\le\liminf_{\varepsilon\to 0}\int||h_{\lambda,\varepsilon}||_{L^2}^2d\lambda\le\frac{Bc^2\cdot 2^{2d+2k}}{w_k^2}<\infty.$$
\end{proof}

\begin{cor}
Under the conditions of theorem \ref{thm_Marstrand2}, part (1), we have 
$$\int_{\Lambda}(Leb(\pi_{\lambda}(X))^{-1}d\lambda<\infty.$$
\end{cor}

\begin{proof}
Given $X$ as in theorem \ref{thm_Marstrand2}, part (1). Take a compact $X'\subset X$ such that $HD(X')=d>\alpha k$, $\h^d(X')>0$ and, for some constant $c>0$, $\h^d(X'\cap B(x, \delta))\leq c\delta^{d}$, for all $x\in X$ and all $\delta>0$. By theorem \ref{thm_L2}, for almost every $\lambda\in\Lambda$, for $\mu_{\lambda}=(\pi_{\lambda})_*(\h^d|_{X'})$, $h_{\lambda}=d\mu_{\lambda}/dLeb$ is $L^2$. For these values of $\lambda$, Cauchy-Scwarz inequality gives
$$\h^d(X')^2=\mu_{\lambda}(\re^k)^2=\left(\int h_{\lambda}(y)dy\right)^2=\left(\int h_{\lambda}(y)\chi_{\pi_{\lambda}(X')}dy\right)^2\le Leb(\pi_{\lambda}(X'))||h_{\lambda}||_{L^2}^2,$$
and thus $Leb(\pi_{\lambda}(X))^{-1}\le Leb(\pi_{\lambda}(X'))^{-1}\le \h^d(X')^{-2}||h_{\lambda}||_{L^2}^2$, so 
$$\int_{\Lambda}(Leb(\pi_{\lambda}(X))^{-1}d\lambda\le \h^d(X')^{-2}\int_{\Lambda}||h_{\lambda}||_{L^2}^2d\lambda\le\frac{Bc^2\cdot 2^{2d+2k}}{w_k^2\h^d(X')^2}<\infty.$$
\end{proof}

\begin{rem}
The previous results also hold for maps $\pi:\Lambda\times X\to M$, where $M$ is a compact manifold (perhaps with boundary) of dimension $k$. Indeed, in this case, there are positive constants $\tilde a, \tilde b$ such that, for every $r\in (0,1)$, $\tilde a r^k\le vol(B(x,r))\le \tilde b r^k$, and the previous proofs work in the same way. 

If $M$ is an arbitrary manifold of dimension $k$, the previous theorems also hold, with the exception of the last statement of theorem \ref{thm_L2} (according to which $\int_{\Lambda}||h_{\lambda}||_{L^2}^2d\lambda<\infty$) and its corollary. Indeed, $M$ has an exhaustion $M=\cup_{n\ge 1}M_n$ as the union of compact manifolds $M_n$ with boundary such that $M_n\subset int M_{n+1}, \forall n\ge 1$. Assuming $X$ compact, since the maps $\pi_{\lambda}$ are continuous, for every $\lambda\in\Lambda$ there is $n\ge 1$ such that $\pi_{\lambda}(X)\subset M_n$. So, defining $\Lambda_n:=\{\lambda\in\Lambda; \pi_{\lambda}(X)\subset M_n\}$, we have $\cup_{n\ge 1}\Lambda_n=\Lambda$, so $\lim_{n\to\infty}\p[\Lambda_n]=1$, and our claim follows from the fact that the previous results hold for the restriction maps $\pi_n:\pi|_{\Lambda_n\times X}:\Lambda_n\times X \to M_n$ for every $n$ sufficiently large (for which $\p[\Lambda_n]>0$).
\end{rem}
\ \\
\ \\


\section{Some Applications}
\subsection{A Theorem by Solomyak} 
We will show how to use theorem \ref{thm_Marstrand2} to give a quick proof of a theorem by Solomyak (see \cite{Solomyak}):
 
\begin{thm}
Given $\beta\in (0,\frac{1}{2})$ then, for almost every $\lambda\in (0,1/2)$ such that $\frac{\log 2}{|\log \beta|}+\frac{\log2}{|\log \lambda|}>1$, the set $K_{\beta}+K_{\lambda}=\{x+y;x\in K_{\beta}, y\in K_{\lambda}\}$ has positive Lebesgue measure.
\end{thm}
\begin{proof}
Let $\Sigma=\{0,1\}^{\mathbb N}$. Given $\gamma\in (0,1/2)$, let $h_{\gamma}:\Sigma\to K_{\gamma}$ be the homeomorphism given by $h_{\gamma}(\theta)=(1-\gamma)\sum_{k=0}^{\infty}a_k \gamma^k$, where $\theta=(a_k)_{k\in \mathbb N}$. Let $\lambda\in (0,1/2)$ such that $\frac{\log 2}{|\log \beta|}+\frac{\log2}{|\log \lambda|}>1$. There are $\lambda_1, \lambda_2$ with $0<\lambda_1<\lambda<\lambda_2<1/2$ such that $\frac{\log \lambda_2}{\log \lambda_1}(\frac{\log 2}{|\log\beta|}+\frac{\log 2}{|\log\lambda_2|})>1$ (and in particular $\frac{\log\lambda_1}{\log\lambda_2}<2$). We will prove that, for almost every $\gamma\in [\lambda_1,\lambda_2]$, $K_{\beta}+K_{\gamma}$ has positive Lebesgue measure. The result then follows by considering a suitable countable union of such intervals $[\lambda_1,\lambda_2]$.

Take $N\in \mathbb N$ large so that $\frac1{N}<\frac1{2\lambda_2}-1$. Let $\Sigma_N=\{(a_k)_{k\in\mathbb N}\in\Sigma; a_k=0, \forall k<N\}\subset\Sigma$ and $X=\Sigma\times\Sigma_N$ endowed with the metric $d((\theta,\tilde\theta),(\tau,\tilde\tau))=|h_{\beta}(\theta)-h_{\beta}(\tau)|+|h_{\lambda_2}(\tilde\theta)-h_{\lambda_2}(\tilde\tau)|\in [0,2]$, and $\Lambda=[\lambda_1,\lambda_2]$, with the probability given by the normalized Lebesgue measure. Notice that $X$ is bilipschitz homeomorphic to $K_{\beta}\times (K_{\lambda_2}\cap [0,\lambda_2^N])$, and so has Hausdorff dimension $\frac{\log 2}{|\log\beta|}+\frac{\log 2}{|\log \lambda_2|}$. Given $\lambda\in\Lambda$, let $\pi_{\lambda}:X \to \mathbb R$ be given by $\pi_{\lambda}(\theta,\tilde\theta)=h_{\beta}(\theta)+h_{\lambda}(\tilde\theta)$. Notice that $\pi_{\lambda}(X)\subset K_{\beta}+K_{\lambda}$. Let $\alpha=\frac{\log \lambda_1}{\log\lambda_2}\in (1,2)$. Notice that we have $HD(X)>\alpha$.

Suppose that $(\theta,\tilde\theta)\ne (\tau,\tilde\tau)$ are two distinct elements of $X$. Let us estimate the probability that $|\pi_\lambda(\theta,\tilde\theta)-\pi_{\lambda}(\tau,\tilde\tau)|<\delta \cdot d((\theta,\tilde\theta),(\tau,\tilde\tau))^{\alpha}$, for a given $\delta\in (0,1/6)$ (for $\delta\in [1/6,1)$ the estimate in (\ref{transversality}) with $k=1$ holds automatically provided $C\ge 6$). We have $\pi_{\lambda}(\theta,\tilde\theta)-\pi_{\lambda}(\tau,\tilde\tau)=h_{\beta}(\theta)-h_{\beta}(\tau)+h_{\lambda}(\tilde\theta)-h_{\lambda}(\tilde\tau)$. Then we have
\begin{equation}\label{EQ0-Solom}
\frac{d}{d\lambda}(\pi_{\lambda}(\theta,\tilde\theta)-\pi_{\lambda}(\tau,\tilde\tau))=\frac{d}{d\lambda}(h_{\lambda}(\tilde\theta)-h_{\lambda}(\tilde\tau)).
\end{equation}
If $\tilde\theta=(a_k)_{k\in\mathbb N}$ and $\tilde\tau=(b_k)_{k\in\mathbb N}$, let $M=\min\{k\in \mathbb N;a_k\ne b_k\}\ge N$. Suppose without loss of generality that $a_M-b_M=1$. Then $h_{\lambda}(\tilde\theta)-h_{\lambda}(\tilde\tau)=(1-\lambda)(\lambda^M+\sum_{k=M+1}^{\infty} c_k \lambda^k)$, where $c_k\in \{-1,0,1\}, \forall k\ge M+1$. Since, for $k\ge M$, 
\begin{equation}\label{EQ1-Solom}
\frac{d}{d\lambda}((1-\lambda)\lambda^k)=k\lambda^{k-1}(1-\frac{k+1}{k}\lambda)\ge k\lambda^{k-1}(1-\frac{N+1}{N}\lambda)\ge k\lambda^{k-1}(1-\frac{\lambda}{2\lambda_2})\ge\frac12 k\lambda^{k-1}>0.
\end{equation}
Thus, from (\ref{EQ1-Solom}) we have
\begin{eqnarray}\label{EQ2-Solom}
\frac{d}{d\lambda}(h_{\lambda}(\tilde\theta)-h_{\lambda}(\tilde\tau))&=& M\lambda^{M-1}-(M+1)\lambda^M+\sum_{k=M+1}^{\infty}c_k\frac{d}{d\lambda}((1-\lambda)\lambda^k) \nonumber \\ 
&\geq &M\lambda^{M-1}-(M+1)\lambda^M -\sum_{k=M+1}^{\infty}\frac{d}{d\lambda}((1-\lambda)\lambda^k) \nonumber \\ 
&=& M\lambda^{M-1}-2(M+1)\lambda^M=M\lambda^{M-1}(1-2\cdot\frac{M+1}{M}\lambda)\nonumber \\  
&\ge & M\lambda^{M-1}(1-2\cdot\frac{N+1}{N}\lambda)\ge M\lambda_1^{M-1}(1-2\cdot\frac{N+1}{N}\lambda_2) \nonumber \\  
&\ge & C_1\lambda_1^{M-1}, 
\end{eqnarray}
 where $C_1:=M(1-2\cdot\frac{N+1}{N}\lambda_2)>0$. \\ 
 In particular, $|h_{\lambda}(\tilde\theta)-h_{\lambda}(\tilde\tau)|\le |h_{\lambda_2}(\tilde\theta)-h_{\lambda_2}(\tilde\tau)|, \forall \lambda\in [\lambda_1,\lambda_2]$. \\
If $|h_{\beta}(\theta)-h_{\beta}(\tau)|\ge 2|h_{\lambda_2}(\tilde\theta)-h_{\lambda_2}(\tilde\tau)|$ then, for every $\lambda\in [\lambda_1,\lambda_2]$, 
\begin{eqnarray*}
|\pi_{\lambda}(\theta,\tilde\theta)-\pi_{\lambda}(\tau,\tilde\tau)|&\ge & |h_{\beta}(\theta)-h_{\beta}(\tau)|-|h_{\lambda}(\tilde\theta)-h_{\lambda}(\tilde\tau)|\\ 
&\ge & |h_{\beta}(\theta)-h_{\beta}(\tau)|-|h_{\lambda_2}(\tilde\theta)-h_{\lambda_2}(\tilde\tau)|\\
&\ge & \frac13(|h_{\beta}(\theta)-h_{\beta}(\tau)|+|h_{\lambda_2}(\tilde\theta)-h_{\lambda_2}(\tilde\tau)|)\\
&=&\frac13 d((\theta,\tilde\theta),(\tau,\tilde\tau))\ge \delta\cdot d((\theta,\tilde\theta),(\tau,\tilde\tau))^{\alpha},
\end{eqnarray*}
since $\delta\in (0,1/6)$, $\alpha\in (1,2)$ and $d((\theta,\tilde\theta),(\tau,\tilde\tau))\in [0,2]$.\\
Suppose now that $|h_{\beta}(\theta)-h_{\beta}(\tau)|<2|h_{\lambda_2}(\tilde\theta)-h_{\lambda_2}(\tilde\tau)|$. Then, $$d((\theta,\tilde\theta),(\tau,\tilde\tau))=|h_{\beta}(\theta)-h_{\beta}(\tau)|+|h_{\lambda_2}(\tilde\theta)-h_{\lambda_2}(\tilde\tau)|<3|h_{\lambda_2}(\tilde\theta)-h_{\lambda_2}(\tilde\tau)|.$$
By definition of $h_{\lambda}$, we have 
\begin{eqnarray*}
h_{\lambda_2}(\tilde\theta)-h_{\lambda_2}(\tilde\tau)&\in &[(1-\lambda_2)(\lambda_2^M-\sum_{k=M+1}^{\infty} \lambda_2^k), \,(1-\lambda_2)(\lambda_2^M+\sum_{k=M+1}^{\infty} \lambda_2^k)]\\
&=&[(1-\lambda_2)\lambda_2^M-\lambda_2^{M+1},\lambda_2^M]\subset (0,\lambda_2^M], \, \, \text{since} \, \, \lambda_{2}<\frac{1}{2}.
\end{eqnarray*} On the other hand, consider the inequality
$|\pi_\lambda(\theta,\tilde\theta)-\pi_{\lambda}(\tau,\tilde\tau)|<\delta \cdot d((\theta,\tilde\theta),(\tau,\tilde\tau))^{\alpha}$.
Since, from (\ref{EQ0-Solom}) and (\ref{EQ2-Solom}), $\frac{d}{d\lambda}(\pi_{\lambda}(\theta,\tilde\theta)-\pi_{\lambda}(\tau,\tilde\tau)) \ge C_1\lambda_1^{M-1}$, by the mean value theorem, the set of values of $\lambda$ for which the above inequality holds is an interval whose measure is at most 
\begin{eqnarray*}
\frac{2\delta \cdot d((\theta,\tilde\theta),(\tau,\tilde\tau))^{\alpha}}{C_1\lambda_1^{M-1}}&<&\frac{2\delta (3|h_{\lambda_2}(\tilde\theta)-h_{\lambda_2}(\tilde\tau)|)^{\alpha}}{C_1\lambda_1^{M-1}} 
\le  \frac{2\delta(3\lambda_2^M)^{\alpha}}{C_1\lambda_1^{M-1}}  \\
&\le & \frac{18\delta(\lambda_2^M)^{\alpha}}{C_1\lambda_1^{M-1}}=\frac{18\lambda_1\delta}{C_1} \le C\delta,
\end{eqnarray*} with $C:=\max\{6,18\lambda_1/C_1\}$.  The result then follows directly from Theorem \ref{thm_Marstrand2}.
\end{proof}

\subsection{A result related to Falconer's Conjecture}
Falconer's conjecture (see \cite{Fal_conj}) states that for every subset $E\subset \mathbb{R}^d$, $d\geq 2$, such that $HD(E)>d/2$, then the distance set
\[
\Delta(E) = \{|x-y|;\ x\in E, y\in E\},
\]
has positive Lebesgue measure, where $|\cdot|$ represents the Euclidean norm in $\mathbb{R}^d$.\\
Given any point $\lambda$ of $\mathbb{R}^d$ we define the set 

$$\Delta_{\lambda}(E):=\{|\lambda-y|: y\in E\},$$ 
 of distances from the point $\lambda$ to all points in the set $E$.
We prove the following 
\begin{thm}\label{Falconer Conjecture}
If $HD(E)>1$, then for almost all $\lambda\in \mathbb{R}^d$ we have that $\Delta_{\lambda}(E)$ has positive Lebesgue measure.
\end{thm}

The last theorem guarantees the existence of a set  $\Gamma \in \mathbb{R}^d$ with full Lebesgue measure such that for every $x\in \Gamma$, the Lebesgue measure of the distance set $\Delta_{x}(E)$, is positive \emph{i.e.}, $\text{Leb}(\Delta_{x}(E))>0$. According with the Theorem \ref{Falconer Conjecture}, if $E\cap \Gamma \neq \emptyset$, then Falconer's conjecture is verified for the set $X$. 

This is a less stringent condition compared to Falconer's conjecture, which typically requires 
$HD(E)>\frac{d}{2}$. By ensuring $\Gamma$ exists under the condition $HD(E)>1$, it provides a broader scope for proving the conjecture.\\
To prove Theorem \ref{Falconer Conjecture}, we will leverage Theorem \ref{thm_Marstrand2} and employ some tools from linear algebra to ensure the transversality condition.\\
\indent Given $R>0$, $\delta>0$, $a\in \mathbb{R}$, and $v\in \mathbb{R}^d$, consider the set 
$$\mathcal{A}_{a, v, R}^{\delta}:=\Big\{w\in \mathbb{R}^d: |a-\langle v, w \rangle|\leq \frac{\delta}{2}\Big\}\cap B(0, R).$$

\begin{lem}\label{L-Falconer} There is a constant $\Theta$ (which only depends on $d$ and $R$) such that 
$$\text{Leb}(\mathcal{A}_{a, v, R}^{\delta})\leq \Theta \delta.$$
\end{lem} 
\begin{proof}
Note that $\Big\{w\in \mathbb{R}^d: |a-\langle v, w \rangle|= \frac{\delta}{2}\Big\}$ are two parallel hyperplane of dimension $d-1$ and normal vector $v$. From the symmetry of $B(0,R)$, for any $a\in \mathbb{R}$ we have 

$$\text{Leb}(\mathcal{A}_{a, v, R}^{\delta})\leq \text{Leb}(\mathcal{A}_{0, v, R}^{\delta}).$$
Using again the symmetry of the ball $B(0,R)$ we obtain that the Lebesgue measure of $\text{Leb}(\mathcal{A}_{0, v, R}^{\delta})$ is smaller than a cylinder of base $v^{\perp}\cap B(0,R)$ ($(d-1)$-dimensional ball) and height $\delta$. In other words, 
$$\text{Leb}(\mathcal{A}_{0, v, R}^{\delta})\leq \Theta\delta,$$
where $\Theta:= \dfrac{4^{d-1}(d-1)!}{(2(d-1))!}\pi^{\frac{d-1}{2}}R^{d-1}$ is the $(d-1)$-volume of a ball of radius $R$. 
\end{proof}

\begin{proof}[\emph{\textbf{Proof of Theorem \ref{Falconer Conjecture}}}]
Fix $R>0$ and consider the normalized Lebesgue measure on $B(0,R)$. Define $\pi\colon E\times B(0,R) \to \mathbb{R}$ as
$$\pi(x, \lambda)= ||x-\lambda||^2.$$
We will prove that $\pi$ satisfies the transversality condition (\ref{transversality}). For this sake, for every pair $x_1\neq x_2\in E $, we define the following set 
\begin{eqnarray*}
\mathcal{B}_{x_1, x_2, R}^{\delta}&:=&\Big\{\lambda\in \mathbb{R}^d: \Big |\pi(x_1,\lambda)-\pi(x_2,\lambda)\Big|\leq \delta||x_1-x_2||\Big\}\cap B(0,R)\\
&=&\Big\{\lambda\in \mathbb{R}^d: \Big|||x_1-\lambda||^2-||x_2-\lambda||^2\Big|\leq \delta||x_1-x_2||\Big\}\cap B(0,R)\\
&=&\Big\{\lambda\in \mathbb{R}^d: \Big| \frac{||x_1||^2-||x_2||^2}{||x_1-x_2||}-2\Big\langle \frac{x_1-x_2}{||x_1-x_2||}, \lambda \Big\rangle \Big |\leq \delta\Big\}\cap B(0,R)\\
&=& \mathcal{A}_{a, v, R}^{\delta},
\end{eqnarray*}
where $a= 2\dfrac{||x_1||^2-||x_2||^2}{||x_1-x_2||}$ and $v=\dfrac{x_1-x_2}{||x_1-x_2||}$.\\
So, from Lemma \ref{L-Falconer} $$\text{Leb}(\mathcal{B}_{x_1, x_2, R}^{\delta})=\text{Leb}(\mathcal{A}_{a, v, R}^{\delta})\leq \Theta \delta.$$
The last inequality provides the transversality condition (\ref{transversality}) with $\alpha = 1$ and $k=1$. Therefore, as $HD(E)>1=\alpha k$, it follows from Theorem \ref{thm_Marstrand2} that there is $\Gamma_R\subset B(0,R)$ of full Lebesgue measure such that $\text{Leb}(\pi_{\lambda}(E))>0$, for all $\lambda \in \Gamma_{R}$. Since the real function $x \to \sqrt{x}$ preserves positive measure we can conclude that 
$$\text{Leb}(\Delta_{\lambda}(E))>0, \,\,\, \text{for all} \,\,\, \lambda \in \Gamma,$$
where $\Gamma:= \displaystyle\bigcup_{R>0} \Gamma_{R}$, which  completes the proof of Theorem \ref{Falconer Conjecture}.

\end{proof}

\section{A version of Marstrand's theorem in non-positive curvature} \label{Example}
\subsection{{Geometric Marstrand}}\noindent 
Consider $\re^2$ with a complete Riemannian metric of non-positive curvature. Fix $p \in \re^{2}$ and let $\{e_1,e_2\}$ be a positive orthogonal basis of $T_p\re^2$, \emph{i.e.}, the basis $\{e_1,e_2\}$ has the induced orientation of $\re^{2}$. In these coordinates, we call $v_\lambda=(\cos \lambda,\sin \lambda):=(\cos \lambda) e_1+(\sin\lambda) e_2\in T_{p}\re^{2}$. Denote by $l_\lambda$ the line through $p$ with velocity $v_\lambda$, given by $l_\lambda(s)=exp_{p}sv_\lambda$, \emph{i.e.}, $l_\lambda(s)$ is a geodesic defined for all parameter values and minimizing the distance between any of its points. Since the curvature is non-positive, then orthogonal projection on $l_\lambda$ is well defined (cf. \cite{Bridson}). We denote by $\pi_\lambda$ such  projection. 
Then, we can define $\pi\colon [0,\pi)\times T_{p}\re^2 \to \re$ as, $\pi(\lambda, w):=\pi_{\lambda}(w)$, the unique parameter $s$ such that $\pi_{\lambda}(exp_{p}w)=\exp_{p}s{v_{\lambda}}$ \emph{i.e.}, 
$$\pi_{\lambda}(exp_{p}w)=exp_{p}\pi(\lambda, w)v_{\lambda}.$$

In this context, we prove the following theorem:

\begin{thm}[Geometric Marstrand]\label{thm_Marstrand4} Let ${\mathbb R}^2$ be endowed with a Riemannian metric of non-positive curvature, and $K \subset {\mathbb R}^2$ analytic. Then:
\begin{enumerate}
\item[\emph{1.}] If $HD(K)>1$ then, for almost every $\lambda \in [0 , \pi)$, we have that $m(\pi_\lambda(K))> 0$, where $m$ is the Lebesgue measure.

\item[\emph{2.}] If $HD(K)\le 1$ then, for almost every $\lambda \in [0 , \pi)$, we have that $HD(\pi_\lambda(K))=HD(K)$, where $\pi_l$ is the orthogonal projection on $l$.

\end{enumerate}
 
\end{thm}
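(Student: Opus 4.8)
\textbf{Proof plan for Theorem \ref{thm_Marstrand4}.}
The strategy is to verify that the family of orthogonal projections $\pi_\lambda$ on the lines $l_\lambda$ through $p$ fits the abstract framework of Section \ref{section_results}, and then invoke Theorems \ref{thm_Marstrand1} and \ref{thm_Marstrand2} directly. Here the parameter space is $\Lambda=[0,\pi)$ (or $[0,2\pi)$) with normalized Lebesgue measure $\p$, the target is $Y=\re$ with Lebesgue measure $\eta$, which is strongly $1$-regular and $\sigma$-finite, so $\kappa=1$; and we take $X=K$, which is analytic in $\re^2$, a complete separable metric space. The desired conclusions $m(\pi_\lambda(K))>0$ when $HD(K)>1$, and $HD(\pi_\lambda(K))=HD(K)$ when $HD(K)\le 1$, are then exactly Theorem \ref{thm_Marstrand2}(i) and Theorem \ref{thm_Marstrand1} with $\alpha=1$, $\kappa=1$, once transversality is established. (For the case $HD(K)\le1$ one also needs the easy reverse inequality $HD(\pi_\lambda(K))\le HD(K)$, which holds because each $\pi_\lambda$ is $1$-Lipschitz, a consequence of property (1) of the projection proposition together with convexity of geodesics in non-positive curvature.)

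So the whole content is to prove the transversality estimate
\[
\p\bigl[\lambda: \abs{\pi_\lambda(x_1)-\pi_\lambda(x_2)}\le \delta\, d(x_1,x_2)\bigr]\le C\delta
\]
with $\alpha=1$, uniformly in $x_1,x_2\in K$ and $\delta>0$. Fix $x_1\ne x_2$. The plan is to analyze, for each $\lambda$, the displacement $\pi_\lambda(x_1)-\pi_\lambda(x_2)$ along $l_\lambda$ as a function of $\lambda$. First I would reduce to the case where neither $x_i$ lies on $l_\lambda$ (the exceptional $\lambda$ form a measure-zero set) and consider the geodesic quadrilateral with vertices $x_1,x_2,\pi_\lambda(x_2),\pi_\lambda(x_1)$. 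By property (3) of the projection proposition, the angles at $\pi_\lambda(x_1)$ and $\pi_\lambda(x_2)$ (between the projecting segment and the line) are at least $\pi/2$. Applying the law of cosines in the triangle with vertices $x_1$, $\pi_\lambda(x_1)$, $\pi_\lambda(x_2)$ — where the angle at $\pi_\lambda(x_1)$ is $\ge\pi/2$ — gives
\[
d(x_1,\pi_\lambda(x_2))^2 \ge d(x_1,\pi_\lambda(x_1))^2 + \abs{\pi_\lambda(x_1)-\pi_\lambda(x_2)}^2,
\]
and a symmetric inequality from the triangle $x_2,\pi_\lambda(x_2),\pi_\lambda(x_1)$. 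Combining these with the triangle inequality $d(x_1,\pi_\lambda(x_2))\le d(x_1,x_2)+d(x_2,\pi_\lambda(x_2))$ should yield a bound of the form $\abs{\pi_\lambda(x_1)-\pi_\lambda(x_2)}\le d(x_1,x_2)$ plus a comparison of the two projection distances; more usefully, it identifies $\abs{\pi_\lambda(x_1)-\pi_\lambda(x_2)}$ as controlled by how nearly orthogonal the segment $[x_1,x_2]$ is to $l_\lambda$.

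The heart of the matter is then a quantitative, curvature-free estimate showing that the set of directions $\lambda$ for which $[x_1,x_2]$ is almost perpendicular to $l_\lambda$ — equivalently, for which $\abs{\pi_\lambda(x_1)-\pi_\lambda(x_2)}$ is small — has $\p$-measure $\lesssim \delta$. I expect this to be the main obstacle, because in non-positive curvature one cannot simply write $\pi_\lambda(x)$ in closed form as in the Euclidean case $\pi_\lambda(x)=\langle x-p, v_\lambda\rangle$. The way around it is to work in the tangent space $T_pM$ via $\exp_p$ (a diffeomorphism by Hadamard): writing $x_i=\exp_p w_i$, Gauss's Lemma gives that the signed parameter $\pi_\lambda(\exp_p w)$ equals the value at which the geodesic ray $s\mapsto \exp_p(sv_\lambda)$ is closest to $\exp_p w$, and one shows that $\lambda\mapsto \pi_\lambda(\exp_p w)$ is $C^1$ with derivative bounded below in absolute value near the ``critical'' angle where $[p,x]\perp l_\lambda$ — using that the second variation of energy is controlled by curvature $\le 0$ from one side. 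Concretely, I would estimate $\frac{\partial}{\partial\lambda}\bigl(\pi_\lambda(x_1)-\pi_\lambda(x_2)\bigr)$ from below on the relevant range of $\lambda$ by a constant times $d(x_1,x_2)$ (after normalizing), so that $\lambda\mapsto \pi_\lambda(x_1)-\pi_\lambda(x_2)$ sweeps through a $\delta d(x_1,x_2)$-neighborhood of $0$ in a $\lambda$-interval of length $O(\delta)$; integrating (or rather, just measuring that sublevel set) gives the required $C\delta$ bound with $C$ absolute. Once transversality holds with $\alpha=\kappa=1$, the theorem follows from Section \ref{section_results} with no further work.
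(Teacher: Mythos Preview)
Your high-level strategy is exactly the paper's: take $\Lambda=[0,\pi)$ with normalized Lebesgue measure, $Y=\re$, $\alpha=\kappa=1$, verify the transversality estimate \eqref{transversality} for the family $\{\pi_\lambda\}$, and invoke Theorems \ref{thm_Marstrand1} and \ref{thm_Marstrand2}. The entire content of the theorem is the transversality bound, and this is where your sketch leaves the real work undone. The paper first reduces (via Howroyd's Corollary 7) to $K$ compact, hence $K\subset B_R(p)$ where the curvature is bounded below; you omit this step, but without it none of the constants you need can be made uniform in $x_1,x_2$. The paper then proves a pointwise geometric inequality (Lemma \ref{L1}): there is $\eta=\eta(K)>0$ with
\[
d(\pi_\lambda(u),\pi_\lambda(v))\ \ge\ \eta\,\abs{\sin(\lambda-\theta(u,v))}\,d(u,v)\qquad\text{for all }u,v\in K,\ \lambda\in[0,\pi),
\]
where $\theta(u,v)$ is the direction through $p$ orthogonal to $\gamma_{uv}$; transversality follows immediately from this and the elementary estimate $m\{\lambda:\abs{\sin(\lambda-\theta)}\le\delta/\eta\}\le C\delta$. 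The proof of Lemma \ref{L1} is substantial: Gauss--Bonnet is used to show that the relevant angles have derivative $1$ in $\lambda$ at the critical direction (Lemma \ref{L3}, Corollary \ref{C2}), comparison inequalities in $B_R(p)$ relate sides and angles of geodesic triangles (Lemmas \ref{L4}--\ref{L6}, using the curvature bound of Remark \ref{R1}), and there is a nontrivial case analysis according to whether the foot $I(u,v)$ lies on the segment $[u,v]$.

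Your proposed alternative --- bound $\partial_\lambda\bigl(\pi_\lambda(x_1)-\pi_\lambda(x_2)\bigr)$ from below near its zero via second-variation/Jacobi-field considerations --- is in spirit the same statement as the sine bound above, but you have not actually carried it out, and the tools you name do not by themselves deliver it. The law-of-cosines inequalities you extract from the quadrilateral yield only the \emph{upper} bound $\abs{\pi_\lambda(x_1)-\pi_\lambda(x_2)}\le d(x_1,x_2)$ (the $1$-Lipschitz property), not the needed lower bound. The remark that ``the second variation of energy is controlled by curvature $\le 0$'' gives convexity of $s\mapsto d(x,l_\lambda(s))^2$, which is what makes $\pi_\lambda$ well-defined and Lipschitz; it does not give a \emph{uniform lower} bound on the $\lambda$-derivative of the difference, and certainly not one scaling like $d(x_1,x_2)$. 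Obtaining that requires precisely the angle-control under bounded curvature that the paper establishes in its auxiliary lemmas. So your plan has the right skeleton and the same endgame as the paper, but the crucial geometric step is a promissory note rather than an argument.
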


From Hadamard's Theorem, any  Riemannian surface, simply connected, and with a Riemannian metric of non-positive curvature is diffeomorphic to $\re^2$ with metric of non-positive curvature. Therefore, the last theorem is also valid for Hadamard's surfaces. 

\subsection{Some Geometric Properties}
Let ${\displaystyle p\in \re^2}$. For $v\in T_p\re^2$, we make the identification $\displaystyle T_{v}T_{p}\re^2\cong T_{p}\re^2$.\\
\ \\
\noindent \textbf{Gauss's Lemma:} Let $v,w\in B_{\epsilon}(0)\in T_vT_p \re^2$ and $\re^2\ni q=exp_pv$. Then, 
$$\left\langle D(exp_p)_{v}v,D(exp_p)_{v}w\right\rangle_{q}=\left\langle v,w\right\rangle_{p},$$
where $exp_{p}$ is the exponential map. \\
\ \\
\noindent \textbf{The law of cosines:} Given three different $x, y, z \in \re^2$   
$$d^{2}(x,z)\geq d^{2}(y,x)+d^{2}(y,z)-2d(y,x)d(y,z)\cos \, \angle_{y}(x,z),$$
where $d(\cdot,\cdot)$ is the distance on $M$ generated by $\left\langle \, , \,\right\rangle$ and $\angle_{y}(x,z)$ is the angle in $y$ between the segments of geodesic $[y,x]$ and $[y,z]$. \\
Consequently, 
\begin{equation}\label{EL4}
d(x,y)\geq \sin \, \angle_{z}(x,y)\, \, d(x,z).
\end{equation}

\begin{rem}\label{R1}
\textit{Let $R>0$ be such $K\subset B(p,R)$ the ball of radius $R$ and center $p$, then there is a constant $c\geq 0$ such that 
\begin{equation}\label{EGMT1}
-c\leq \kappa(x)\leq 0 \ \ \text{for} \ \ x\in B(p,R),
\end{equation}
where $\kappa(x)$ is the Gaussian curvature at $x$.
Therefore, the equation \emph{(\ref{EGMT1})} implies that there is a constant $k\geq 1$ such that for each $x\in B(p, R)$ and $u,v\in T_x \re^2$ we have that  \emph{(cf.\, \cite{ManP})}
\begin{equation}\label{eq1}
k\left\|u-v\right\|\geq d(exp_x u, exp_x v)\geq \left\|u-v\right\|.
\end{equation}}
Here $\|v\|=\sqrt{\left\langle v , v \right\rangle}$.
\end{rem}
\ \\
\indent Given $u\neq v$ in ${\mathbb R}^2$, consider $\gamma_{uv}$ the unique geodesic such that  $\gamma_{uv}(0)=u$ and $\gamma_{u\,v}(d(u,v))=v$, then there are three functions  $\theta(u,v)\in [0,\pi)$,  $t(u,v), s(u,v)\in \re$  such that the geodesic $l_{\theta(u,v)}(s)=exp_{p}sv_{\theta(u,v)}$ intersects orthogonally the geodesic $\gamma_{uv}$ in the point 
$$I(u,v):=exp_{p}s(u,v)v_{\theta(u,v)}=\gamma_{uv}(t(u,v))$$ and the basis $\{l'_{\theta(u,v)}(s(u,v))=d(\,exp_{p})_{s(u,v)v_{\theta(u,v)}}(v_{\theta(u,v)}), \, \gamma_{uv}'(t(u,v))\}$  is positive. Note that if
 $\gamma_{uv}$ passes through $p$, then $\theta(u,v)$ is also well defined, and $s(u,v)=0$.\\
\noindent For $x\neq p$ denote by $\theta(x)\in [0,\pi)$ the angle such that the geodesic $exp_{p}tv_{\theta(x)}$ passes through $x$.

\subsection{Proof of Theorem \ref{thm_Marstrand4}}
The following is the main result of this section, it will be very useful in the  proof the Theorem \ref{thm_Marstrand4}.
\begin{lem}\label{L1'}
Let $K\subset \re^{2}$ a compact set, then there is a positive constant $\eta$ \emph{(}which only depends on $K$\emph{)} such that for any $u,v\in K$  
$$d(\pi_{\lambda+\theta(u,v)}(u), \pi_{\lambda+\theta(u,v)}(v))\geq \eta \sin (\lambda) \, d(u,v), \ \  \text{for any} \ \ \lambda\in[0,\pi].$$
\end{lem}
Using the last Lemma we are able to prove the transversality condition (\ref{transversality}) and consequently, we can use Theorem \ref{thm_Marstrand2}.
 \begin{proof}[\bf{Proof of Theorem \ref{thm_Marstrand4}}]
 By Corollary 7 in \cite{Howroyd_1995}, we may assume without loss of generality that $K$ is compact. Let $x_1, x_2\in \re^2$, without loss of generality we can assume $\theta(x_1,x_2)=0$. Thus, with the notation of Theorem \ref{thm_Marstrand2}, consider the probability space $([0,\pi),m)$ where $m$ is the normalized Lebesgue measure, $X=K$, $k=1$, and $\alpha=1$. We want to prove that $\pi(\lambda,x)$ satisfies the transversality property (\ref{transversality}). By Lemma \ref{L1'} we have that  $d(\pi_{\la}(x_1),\pi_{\la}(x_2))\geq \eta\sin(\la) d(x_1,x_2)$, thus 
\begin{eqnarray*}
m[\lambda\in [0,\pi):d(\pi_{\la}(x_1),\pi_{\la}(x_2))\leq\delta d(x_1,x_2)]&\leq& m[\lambda\in [0,\pi):\eta\sin \lambda\leq\delta ]\\
&=&m[\lambda\in [0,\pi):\sin \lambda\leq\delta/\eta].
\end{eqnarray*}
Since the function $\sin\lambda$ is $C^{1}$, then there is a constant $C>0$ such that for all $\delta>0$ 
$$m[\lambda\in [0,\pi):\sin \lambda\leq\delta/\eta]\leq C\delta,$$
so we have the transversality condition. Therefore the result follows directly from Theorem \ref{thm_Marstrand2}. 
\end{proof}

\subsubsection{Proof of Lemma \ref{L1'}}
\noindent  Before proving the Lemma \ref{L1'} we will give some definitions and we prove some auxiliary lemmas.\\
\indent Given $x\neq y$, then for $\epsilon>0$ small we can define the function $$\lambda \to \alpha_{y}(x,\lambda):= \angle_{x}(\pi_{\theta(x,y)+\lambda}(x), y) \ \ \text{and} \ \ \lambda \to \alpha_{x}(y,\lambda):= \angle_{y}(\pi_{\theta(x,y)+\lambda}(y), x) $$
for all $|\lambda|<\epsilon$. 
\begin{rem}The above definition is valid for  any $x\neq y$. However, for our goal, if $\gamma_{xy}$ passes through $p$, then we consider functions $\alpha_{p}(x,\lambda)$ and $\alpha_{p}(y,\lambda)$.
\end{rem}

We denote by $I(x,y,\lambda)$ the unique point of intersection of the geodesic $exp_{p}sv_{\theta(x,y)+\lambda}$ with the geodesic $\gamma_{xy}$ (cf. Figures \ref{Figure1}, \ref{Figure2}).
Note that, if the geodesic $\gamma_{xy}$  passes  through $p$, then $I(x,y,\lambda)=p$, for all $|\lambda|<\epsilon$. 



\begin{lem}\label{L2}
The functions $ (x,y)\to \pi_{\theta(x,y)+\lambda}(x)\, ; \, (x,y)\to  \pi_{\theta(x,y)+\lambda}(y)$ are differentiable. Moreover,  the function  $\lambda \to I(x,y,\lambda)$ is  differentiable in $\lambda$.
\end{lem}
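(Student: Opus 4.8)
\textbf{Proof plan for Lemma \ref{L2}.}
The plan is to reduce everything to the implicit function theorem, applied to geometric data that are smooth because exponential maps and Riemannian distance on a Hadamard manifold are smooth away from the diagonal. First I would introduce, for a pair $(x,y)$ with $x\neq y$, the geodesic segment $\gamma_{xy}$ and the family of lines $l_{\theta(x,y)+\lambda}(s)=\exp_p s\,v_{\theta(x,y)+\lambda}$; the orthogonal projection of $u$ onto such a line is characterized by the vanishing of the inner product $\langle \gamma_{l}'(s),\, \exp_{l(s)}^{-1}u\rangle$, where $l$ is the line and $\exp_{l(s)}^{-1}u$ is the initial velocity of the minimizing geodesic from $l(s)$ to $u$ (which exists and is unique, and depends smoothly on both endpoints, since $M$ is Hadamard). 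So the projected point $\pi_{\theta(x,y)+\lambda}(x)$ corresponds to the parameter $s=\pi(\theta(x,y)+\lambda,\exp_p^{-1}x)$ solving a single scalar equation $F(s;x,\lambda)=0$, and similarly for $y$.

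Next I would check the nondegeneracy hypothesis of the implicit function theorem, namely that $\partial F/\partial s\neq 0$ at the solution. This is exactly the second-variation/convexity statement behind item (1) of the cited Proposition: on a manifold of non-positive curvature the function $s\mapsto d(l(s),u)^2$ is strictly convex along a geodesic (the Hessian is bounded below by a positive constant on the compact region where $u,x,y$ live, by a Jacobi-field comparison with the flat model, using the curvature bound $-c\le\kappa\le 0$ from Remark \ref{R1}), hence $\partial_s F\neq 0$. Since $\theta(x,y)$ depends smoothly on $(x,y)$ away from the diagonal (it is read off from the initial direction at $p$ of the geodesic through the intersection point, or equivalently from $\exp_p^{-1}$ of points of $\gamma_{xy}$, all smooth), the composite map $(x,y,\lambda)\mapsto\big(\pi_{\theta(x,y)+\lambda}(x),\pi_{\theta(x,y)+\lambda}(y)\big)$ is differentiable by the chain rule.

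For the last assertion, when $\gamma_{xy}$ does not pass through $p$, the intersection point $I(x,y,\lambda)$ of the line $l_{\theta(x,y)+\lambda}$ with $\gamma_{xy}$ is the solution of a $2\times2$ system: $\exp_p s\,v_{\theta(x,y)+\lambda}=\gamma_{xy}(t)$ in the two coordinates supplied by Hadamard's theorem. The Jacobian with respect to $(s,t)$ is invertible precisely because the two geodesics are transverse at $I(x,y,\lambda)$ — indeed they meet orthogonally by construction, so their velocity vectors are linearly independent there — and hence the implicit function theorem gives smooth dependence of $(s,t)$, and in particular of $I(x,y,\lambda)$, on $\lambda$ (and on $(x,y)$). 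I expect the only real subtlety to be bookkeeping: making sure the curve $\gamma_{xy}$ and the angle $\theta(x,y)$ are genuinely smooth functions of $(x,y)$ on $K\times K$ minus the diagonal (and that $\theta$ extends continuously across the locus where $\gamma_{xy}$ passes through $p$, which is why the statement only claims differentiability of $I$ off that locus), together with confirming the strict-convexity lower bound is uniform on the relevant compact set so that $\partial_s F$ stays away from $0$; both follow from the curvature bounds and Gauss's Lemma as recorded above, so no genuinely new ingredient is needed.
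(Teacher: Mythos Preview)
Your approach is essentially the same as the paper's: convexity of the distance function plus the implicit function theorem for the projections, and a transversality argument (what the paper calls a ``Poincar\'e-like map'') for $I(x,y,\lambda)$. One small slip: the line $l_{\theta(x,y)+\lambda}$ meets $\gamma_{xy}$ orthogonally only at $\lambda=0$, not for general $\lambda$; but this is harmless, since orthogonality at $\lambda=0$ already gives the invertible Jacobian needed for the implicit function theorem, and transversality then persists for nearby $\lambda$ by continuity.
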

\begin{proof}
The differentiability of $\pi_{\theta(x,y)+\lambda}(x)$ and $\pi_{\theta(x,y)+\lambda}(y)$ is an immediate consequence of the convexity of the distance function and Theorem of Implicit Functions.\\
To prove the second part, suppose that $\gamma_{xy}$ passes through $p$, then $I(x,y,\lambda)$ is constant equal to $p$, which is differentiable. Therefore, we can assume that $\gamma_{xy}$ does not pass through $p$, then there is a neighbourhood $U(x,y)\subset [0,2\pi)$ of $\theta(x,y)$ such that $I(x,y,\lambda)$ is a Poincar\'e-like map between $U(x,y)$ and $\gamma_{xy}(\re)$ defined as the intersection of the geodesic $\gamma_{v_{\lambda}}$ with $\gamma_{xy}$ for $\lambda\in U(x,y)$. Therefore, there is $\epsilon>0$ small such that  $(\theta(x,y)-\epsilon,\theta(x,y)+\epsilon )\subset U(x,y)$. Thus we concluded the proof of the second part.\ \\
\end{proof}
\begin{lem}\label{L3}
For $x,y\in \re^2$, $x\neq y$, the functions $\alpha_{y}(x,\lambda)$ and $\alpha_{x}(y,\lambda)$ are differentiable in $\lambda$. Moreover,
$$\lim_{\lambda\to 0} \frac{\alpha_{y}(x,\lambda)}{\lambda}=1 \ \text{and} \ \ \lim_{\lambda\to 0} \frac{\alpha_{x}(y,\lambda)}{\lambda}=1.$$


\end{lem}
\begin{proof}
The differentiability of both functions follows from Lemma \ref{L2} and the differentiability of the exponential map.  For the proof of limit, we consider two cases:\\
\noindent \textbf{Case 1:} The geodesic $\gamma_{xy}$ does not pass through $p$. \\
\noindent \textbf{Case 2:} The geodesic $\gamma_{xy}$ passes through $p$. 
 \ \\
We prove Case 1, since the proof of Case 2 is analogous.
\begin{figure}[htbp]
   \centering
    \begin{subfigure}[b]{0.43\textwidth}
        \includegraphics[width=\textwidth]{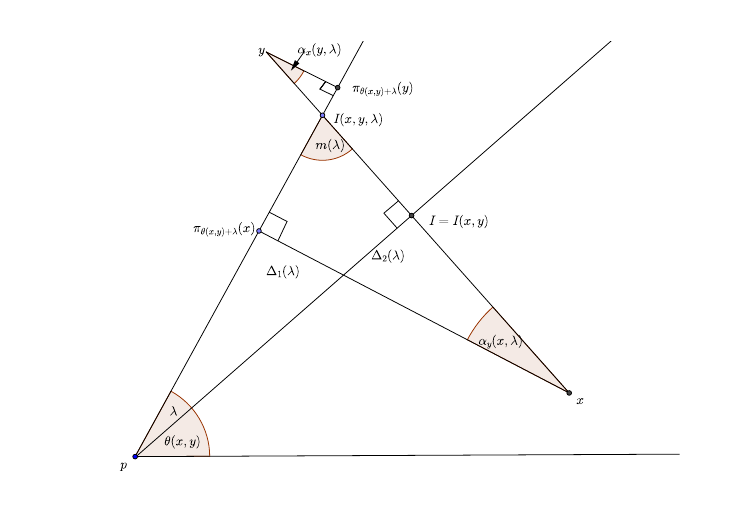}
        \caption{ }
        \label{Figure1}
    \end{subfigure}
   ~
   \begin{subfigure}[b]{0.53\textwidth}
        \includegraphics[width=\textwidth]{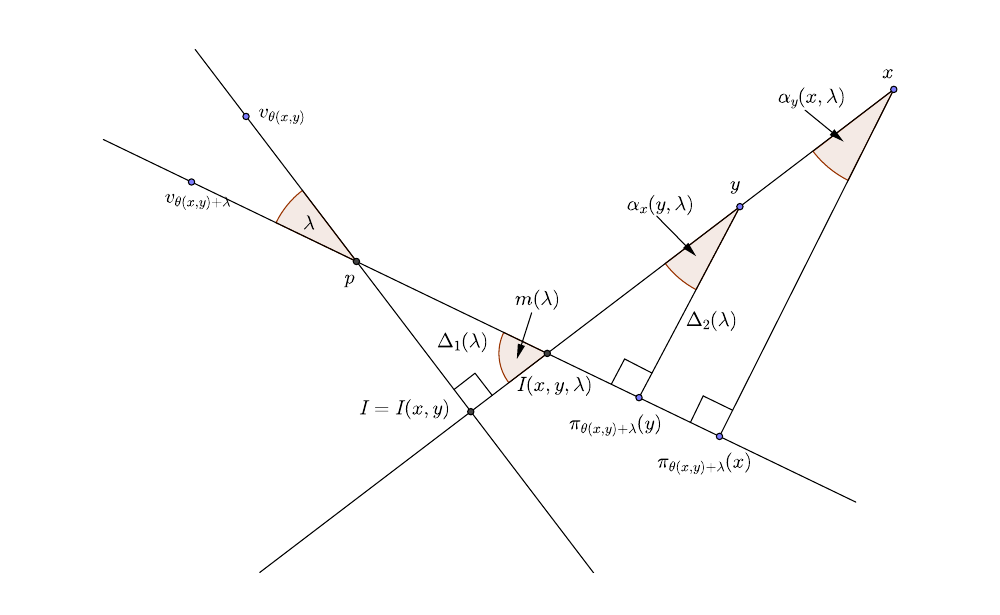}
        \caption{ }
       \label{Figure2}
    \end{subfigure}
    \caption{$\gamma_{xy}$ does not pass through $p$}\label{Figures1}
\end{figure} 

\noindent Put $I:=I(x,y,0)$ and consider the triangle $\Delta_{1}(\lambda)$ generated by the points $\{p,I,I(x,y,\lambda)\}$ and let $m(\lambda)=\angle_{I(x,y,\lambda)}(p,I)$ for $|\lambda|<\epsilon$ (see Figure 1a). So, by Lemma \ref{L2}, since $I(x,y,\lambda)$ is differentiable in $\lambda$, the function $m(\lambda)$ is differentiable in $\lambda$. Then, by the Gauss-Bonnet theorem, the function 
$$h_1(\lambda)=\int_{\Delta_{1}(\lambda)}\kappa(x)d\sigma(x)=-\pi/2+\lambda+m(\lambda)$$
where $\kappa(x)$ denotes the Gaussian curvature. 
Since $m(\lambda)$ is differentiable, then $h_1$ is also differentiable.\\ Moreover, since $\kappa(x)\leq 0$ then $h_1(\lambda)$ has a maximum in $\lambda=0$, thus  $h_1'(0)=0$.
So, we have that $0=h_1'(0)=m'(0)+ 1$ or $m'(0)=-1$.
Analogously, considering  the triangle $\Delta_{2}(\lambda)$ generated by the points $\{x,I(x,y,\lambda),\pi_{\theta(x,y)+\lambda}(x)\}$, we have that the function $h_{2}(\lambda):=\int_{\Delta_{2}(\lambda)}\kappa(x)d\sigma(x)=-\pi/2+m(\lambda)+ \alpha_{y}(x,\lambda)$ is differentiable with maximum in $\lambda=0$. Thus  $h_2'(0)=0$, which implies that  
$$0=h_2'(0)=m'(0)+\ds\lim_{\lambda\to 0} \frac{\alpha_{y}(x,\lambda)}{\lambda}.$$
Therefore, $\ds\lim_{\lambda\to 0} \frac{\alpha_{y}(x,\lambda)}{\lambda}=1$.
The proof for $\alpha_{x}(y,\lambda)$ is analogue.


\end{proof}

\begin{cor}\label{C2}
There is $\epsilon_1>0$ such that for all $x,y\in K$, $x\neq y $ and all $|\lambda|\leq \epsilon_1$, then
$$\sin \alpha_{y}(x,\lambda) \geq \frac{1}{2}\sin |\lambda| \ \ \text{and} \ \ \sin\alpha_{x}(y,\lambda)\geq \frac{1}{2}\sin |\lambda|.$$
\end{cor}

Let $B(p,R)$ be as in Remark \ref{R1}. Given three different points $x,y,z\in B(p, R)$, we consider the vectors $\tilde{z}_{x}=exp^{-1}_{x}(z)$, $\tilde{y}_{x}=exp^{-1}_{x}y$, $\tilde{x}_{z}=exp^{-1}_{z}(x)$ and $\tilde{y}_{z}=exp^{-1}_{z}y$. Then we have the following 
\begin{lem}\label{L5'} In the above notation, it holds that 
$$\sin\angle_{x}(y,z)\,\sin\angle_{z}(x,y)\geq \frac{1}{k^2}\sin \angle_{\tilde{x}_{z}}(0,\tilde{x}_y)\,\sin \angle_{\tilde{z}_{x}}(0,\tilde{z}_{y}),$$
where $k$ is the constant of Remark \emph{\ref{R1}}.
\end{lem}
\begin{proof}
Consider the triangle in $T_x\re^2$ generated by the points $0, \tilde{x}_{z}, \tilde{x}_{y}$ and the triangle in $T_z\re^2$ generated by the points $0, \tilde{z}_{x}, \tilde{z}_{y}$. The by law of sines

$$\frac{\sin\angle_{x}(y,z)}{\Vert\tilde{x}_{z}-\tilde{x}_{y}\Vert}=\frac{\sin \angle_{\tilde{x}_{z}}(0,\tilde{x}_y)}{d(x,y)} \ \ \text{and} \ \
\frac{\sin\angle_{z}(x,y)}{\Vert\tilde{z}_{x}- \tilde{z}_{y}\Vert}=\frac{\sin \angle_{\tilde{z}_{x}}(0,\tilde{z}_{y})}{d(z,y)}.$$
The conclusion followed by Remark \ref{R1}.
\end{proof}
\begin{lem}\label{L5}
Given three different points $x,y,z\in B(p,R)$ with $\angle_{y}(x,z)=\pi/2$, there is $\gamma>0$ such that 
$$\sin \angle_{\tilde{y}_{x}}(0,\tilde{z}_{x})\geq\gamma.$$
\end{lem}
\begin{proof}
Call $L$ the geodesic orthogonal to $\gamma_{xy}$ in $y$. Let $V(x,y)$ be such that $exp_{x}V(x,y)=y$ and put $v(x,y)=\frac{V(x,y)}{\Vert V(x,y)\Vert}$ and $v_{\lambda}(x,y)\in T_x\re^2$ the  vector that forms an angle $\lambda$ with $v(x,y)$. The function $f(\lambda)$ is defined as the unique point where the geodesic $exp_{x}sv_{\lambda}(x,y)$ meet the geodesic $L$. Thus, there exists a unique $s(\lambda)$ such that $f(\lambda)=exp_{x}s(\lambda)v_{\lambda}(x,y)$. It is easy to see that, following the same arguments of the proof of the second part of lemma \ref{L2}, the function $f$ is differentiable in $\lambda$. Consider the function $g(\lambda)=s(\lambda)v_{\lambda}(x,y)=exp_{x}^{-1}(f(\lambda))$ which is also differentiable. If $\theta(x,y,\lambda)$ denotes the angle between $g(0)=V(x,y)$ and $g(\lambda)-g(0)$.
The definition of $g(\lambda)$ and $\angle_{y}(x,z)=\pi/2$ implies that 
$$\langle d(exp_x)_{g(0)}g'(0),d(exp_x)_{g(0)}g(0) \rangle =\langle f'(0), d(exp_x)_{V(x,y)}V(x,y) \rangle=0.$$
Therefore, Gauss's Lemma provides that $\left\langle g'(0),g(0) \right\rangle=0$. Thus, we can conclude  $$\lim_{\lambda\to 0^{\pm}} \cos \theta(x,y,\lambda)=-\frac{\left\langle g(0),g'(0) \right\rangle}{\Vert g(0)\Vert \Vert g'(0)\Vert}=0.$$
Thus, since $x,y\in B(p,R)$, the above equation implies that there are $\tilde{\epsilon}>0$ and $\tilde{\gamma}>0$ independent of $x,y$ such that 
$$\sin \theta(x,y,\lambda)>\tilde{\gamma} \ \ \text{for} \ \ |\lambda|<\tilde{\epsilon} \ \ \text{and} \ \ x,y\in B(p, R).$$
Since $x,y,z\in B(p,R)$, $\angle_{y}(x,z)=\frac{\pi}{2}$ the definition of $g(\lambda)$ implies that there is a constant $c>0$ such that  $\dfrac{\Vert g(\lambda)\Vert}{\Vert g(\lambda)-g(0) \Vert}\geq c$. Therefore,  by the law of sines, if $|\lambda|\geq \tilde{\epsilon}$, 
$$\sin\theta(x,y,\lambda)=\frac{\Vert g(\lambda)\Vert}{\Vert g(\lambda)-g(0) \Vert}\sin |\lambda|\geq c\,\sin|\lambda|\geq c \sin \tilde{\epsilon}.$$
Taking $\gamma=\max\{c\,\sin \tilde{\epsilon}, \tilde{\gamma}\}$. As $z\in L$ the proof is complete.

\end{proof}
\begin{rem}\label{R2}
Observe that the previous Lemma holds if $|\angle_{y}(x,z)-\pi/2|<\delta$ for some $\delta\in (0,\frac{\pi}{2})$, since we only need that $|\cos\angle_{y}(x,z)|<a< 1$ for some $a>0$.
\end{rem}


\begin{proof}[\bf{Proof of Lemma \ref{L1'}}]
Suppose that $u\neq v$ and  the geodesic $\gamma_{uv}$ does not pass through $p$.
Without loss of generality we can assume that $\theta(u,v)=0$. Moreover, to simplify the notation, we put  $I(\lambda):=I(u,v,\lambda)$ and $I=I(u,v,0)$. Let us consider two cases:

\noindent $\textbf{Case 1:}$ $I\in [u,v]$ (cf. Figure \ref{Figure4}).
\begin{figure}[htbp]
	\centering
		\includegraphics[width=0.5\textwidth]{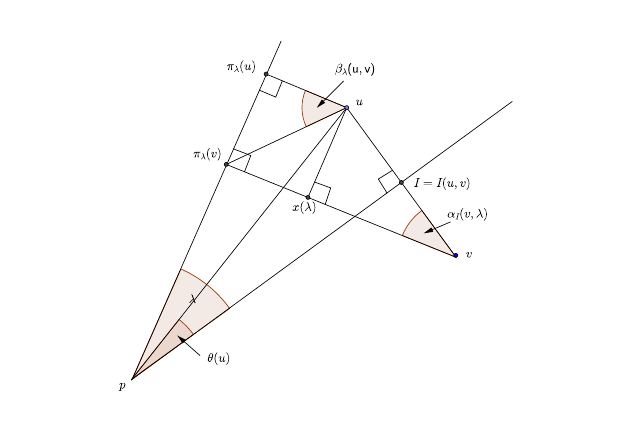}
	\caption{$I\in [u,v]$.}
	\label{Figure4}
\end{figure}
\noindent  From (\ref{EL4}), $$d(\pi_{\lambda}(v), I(\lambda))\geq \sin \alpha_{_{I}}(v,\lambda)\,d(I(\lambda), v), $$ 
$$d(\pi_{\lambda}(u), I(\lambda))\geq \sin \alpha_{_{I}}(u,\lambda)\,d(I(\lambda), u).$$
Let $\epsilon_1$ be given by Corollary \ref{C2}. Then, for $|\lambda|<\epsilon_1$, we have 

\begin{eqnarray}\label{eq2}
d(\pi_{\lambda}(u),\pi_{\lambda}(v))&=&d(\pi_{\lambda}(v), I(\lambda))+d(\pi_{\lambda}(u), I(\lambda)) \nonumber \\
&\geq & \max\{\sin \alpha_{I}(v,\lambda)\, , \sin \alpha_{I}(u,\lambda)\}d(u,v)\nonumber \\ 
 &\geq &\frac{1}{2}\sin |\lambda| \, d(u,v).
\end{eqnarray}

\noindent \textbf{Case 2:} $I\notin [u,v]$ (cf. figure \ref{Figure5}).\\
Without loss of generality, we can assume that for $\lambda$ small it holds that $d(u,\pi_{\lambda}(u))\geq d(v,\pi_{\lambda}(v))$. Denote $x(\lambda)=\pi_{[u,\pi_{\lambda}(u)]}(v)$, the projection of $v$ over the geodesic segment  $[u,\pi_{\lambda}(u)]$. Then, from (\ref{EL4}),  
 $$d(\pi_{\lambda}(u), \pi_{\lambda}(v))\geq \sin\, \beta(\lambda)d(\pi_{\lambda}(v),x(\lambda)),$$
 where $\beta(\lambda)=\angle_{x(\lambda)}(\pi_{\lambda}(u),\pi_{\lambda}(v))$.
 \begin{figure}[htbp]
	\centering
		\includegraphics[width=0.99\textwidth]{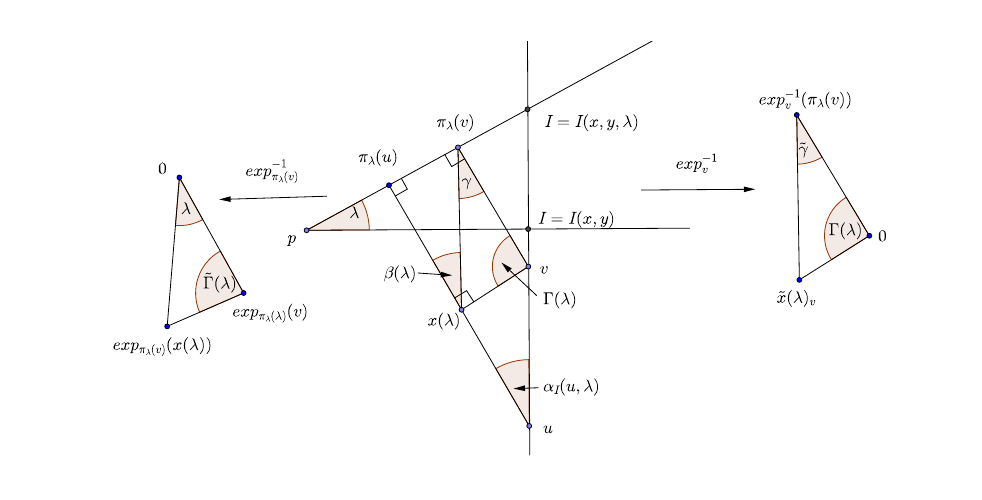}
	\caption{$I\notin [u,v]$.}
	\label{Figure5}
\end{figure}
\ \\
\noindent Considering the triangle in $T_{v}\re^2$ given by the points $0$, $\tilde{x}(\lambda)_{v}=exp_{v}^{-1}(x(\lambda))$ and 
$exp_{v}^{-1}(\pi_{\lambda}(v))$, then, by the law of sines,  $\sin\tilde{\gamma}(\lambda)=\frac{d(x(\lambda),v)\sin \, \Gamma(\lambda)}{\Vert\tilde{x}(\lambda)_{v}-exp_{v}^{-1}(\pi_{\lambda}(v))\Vert}$, where $\tilde{\gamma}(\lambda)=\angle_{exp_{v}^{-1}(\pi_{\lambda}(v))}(0, \tilde{x}(\lambda)_v)$ and $\Gamma(\lambda)=\angle_{v}(\pi_{\lambda}(v),x(\lambda))$. \\
Since  $\Vert\tilde{x}(\lambda)_{v}-exp_{v}^{-1}(\pi_{\lambda}(v))\Vert\leq d(\pi_{\lambda}(v),x(\lambda))$ we have that 
$$d(\pi_{\lambda}(u), \pi_{\lambda}(v))\geq \frac{\sin\, \beta(\lambda)}{\sin \tilde{\gamma}(\lambda)}\sin\, \Gamma(\lambda)\,d(x(\lambda),v).$$
Using (\ref{EL4}) and Corollary \ref{C2} we have that 
$$d(\pi_{\lambda}(u), \pi_{\lambda}(v))\geq \frac{1}{2}\frac{\sin\, \beta(\lambda)}{\sin \tilde{\gamma}(\lambda)}\sin\, \Gamma(\lambda)\,\sin|\lambda|\,d(u,v)\ \ \text{for} \ \ |\lambda|\leq\epsilon_1.$$
Now, since $\angle_{x(\lambda)}(v, \pi_{\lambda}(u))=\angle_{\pi_{\lambda}(u)}(x(\lambda), \pi_{\lambda}(v))=\angle_{\pi_{\lambda}(v)}(v, \pi_{\lambda}(u))=\pi/2$, remark \ref{R1} and Gauss Bonnet Theorem imply that $\Gamma(\lambda)\to\pi/2$ as $\lambda\to 0$. Thus, we can assume that there is a constant $c_1$ such that 
\begin{equation}\label{eq11}
d(\pi_{\lambda}(u), \pi_{\lambda}(v))\geq \frac{c_1}{2}\frac{\sin\, \beta(\lambda)}{\sin \tilde{\gamma}(\lambda)}\,\sin|\lambda|\,d(u,v)\ \ \text{ for } \ \ |\lambda|\leq \epsilon_1.
\end{equation}
 
Considering the triangle in $T_{\pi_{\lambda}(v)}\re^2$ given by the points $0$, $\exp_{\pi_{\lambda}(v)}^{-1}(x(\lambda))$ and 
$\exp_{\pi_{\lambda}(v)}^{-1}(v)$, if $\tilde{\Gamma}(\lambda)=\ds\angle_{\exp_{\pi_{\lambda}(v)}^{-1}(v)}\,(0,\exp_{\pi_{\lambda}(v)}^{-1}(x(\lambda)))$, Lemma \ref{L5'} implies that 
 $$\frac{\sin \gamma(\lambda)}{\sin \tilde{\gamma}(\lambda)}\geq \frac{1}{k^2}\cdot \frac{\sin \tilde{\Gamma}(\lambda)}{\sin\Gamma(\lambda)},$$
where $\gamma(\lambda)=\angle_{\pi_{\lambda}(v)}(v,x(\lambda))$.
Thus, since $\Gamma(\lambda) \to \frac{\pi}{2}$ as $\lambda \to 0$, the Remark \ref{R2} of Lemma \ref{L5} implies that there is a constant $c_2$ such that $\frac{\sin \tilde{\Gamma}(\lambda)}{\sin\Gamma(\lambda)}\geq c_2$ for  $|\lambda|\leq \epsilon_1$, which provides that 
\begin{equation}\label{eq11'}
\frac{\sin \gamma(\lambda)}{\sin \tilde{\gamma}(\lambda)}\geq \frac{c_2}{k^2},\ \ \ \ \text{ for }\,|\lambda|\leq \epsilon_1.
\end{equation}
Moreover, as the curvature is non-positive, then $\gamma(\lambda)-(\frac{\pi}{2}-\Gamma(\lambda))\leq \beta(\lambda)$, thus, as \textit{sine} is an increasing function and $\Gamma(\lambda)\to 0$, then without loss of generality,  we have $\sin{\beta(\lambda)}\geq \frac{\sin \gamma(\lambda)}{2}$ for $|\lambda|<\epsilon_1$.
This last inequality together with the equations (\ref{eq11}) and (\ref{eq11'}) implies that 
\begin{equation}\label{eq12}
d(\pi_{\lambda}(u), \pi_{\lambda}(v))\geq \frac{c_1c_2}{4k^2}\sin|\lambda|\,d(u,v),\ \  \ \ |\lambda|\leq \epsilon_1.
\end{equation}
Taking $\tilde{\eta}=\max\{\frac{c_1c_2}{4k^2} ,\frac{1}{2}\}$, then equations (\ref{eq2}) and (\ref{eq12}) allow us to conclude that  
\begin{equation}\label{eq123}
d(\pi_{\lambda}(u), \pi_{\lambda}(v))\geq \tilde{\eta}\sin|\lambda|\,d(u,v), \ \  \ \ |\lambda|<\epsilon_1.
\end{equation}
From Corollary \ref{C2}, the number $\epsilon_1$ does not depend on $u$ and $v$, so the equation (\ref{eq123}) is valid for all $u,v \in K$.\\
Moreover, by definition of $\theta(\cdot,\cdot)$ it is easy to see that there is a constant $c_3>0$ such that 
\begin{equation*}\label{eq124}
d(\pi_{\lambda}(u), \pi_{\lambda}(v))\geq c_3\sin|\lambda|\,d(u,v), \ \  \,\text{ for } \, u,v \in K, \, \, \, \, |\lambda|\geq \epsilon_1.
\end{equation*}
So, we take $\eta:=\max\{\tilde{\eta},c_3\}$. To finish the proof of the lemma, we note that if the geodesic $\gamma_{uv}$ passes through $p$, then we proceed  similarly to  case 2.
\end{proof}
\bibliographystyle{alpha}	
\bibliography{bibtex}

\noindent

\end{document}